\newcommand\abs[1]{\lvert #1\rvert}
\newtheorem{THM}{Theorem}[section]
\newtheorem{LEM}[THM]{Lemma}
\newtheorem{PROP}[THM]{Proposition}
\newcommand\rank{\operatorname{rank}}
\newcommand\rw{\operatorname{rw}}
\newcommand\lrw{\operatorname{lrw}}
\newcommand\tw{\operatorname{tw}}
\newcommand\pw{\operatorname{pw}}
\newcommand\cutrk{\operatorname{cutrk}}
\newcommand\pivot{\wedge}
\theoremstyle{definition}
\begin{document}
\title[Graphs of small rank-width are pivot-minors]{Graphs of Small Rank-width are Pivot-minors of Graphs of Small Tree-width}
\address{Department of Mathematical Sciences, KAIST, 291 Daehak-ro
  Yuseong-gu Daejeon, 305-701 South Korea}
\author{O-joung Kwon}
\email{ilkof@kaist.ac.kr}
\author{Sang-il Oum}
\email{sangil@kaist.edu}
\thanks{Supported by Basic Science Research
  Program through the National Research Foundation of Korea (NRF)
  funded by the Ministry of Education, Science and Technology
  (2012-0004119). S. O. is also supported by TJ Park Junior Faculty Fellowship.}
\date{\today}

\begin{abstract}
We prove that every graph of rank-width $k$ is a pivot-minor of a graph of tree-width at most $2k$. We also prove that graphs of rank-width at most $1$, equivalently distance-hereditary graphs, are exactly vertex-minors of trees, and graphs of linear rank-width at most $1$ are precisely vertex-minors of paths. 
In addition, we show that bipartite graphs of rank-width at most $1$ are exactly pivot-minors of trees and bipartite graphs of linear rank-width at most $1$ are precisely pivot-minors of paths. 
\end{abstract}

\keywords{rank-width, linear rank-width, vertex-minor, pivot-minor, tree-width, path-width, distance-hereditary}
\maketitle

\section{Introduction} \label{sec:intro}
Rank-width is a width parameter of graphs, introduced by Oum and Seymour~\cite{Oum05},
measuring how easy it is to decompose a graph into a tree-like structure where the ``easiness'' is measured in terms of the matrix rank function derived from edges formed by vertex partitions. Rank-width is a generalization of another, more well-known width parameter called tree-width, introduced by Robertson and Seymour~\cite{RS1986a}. It is well known that every graph of small tree-width also has small rank-width; Oum~\cite{Oum08} showed that if a graph has tree-width $k$, then its rank-width is at most $k+1$. The converse does not hold in general, as complete graphs have rank-width $1$ and arbitrary large tree-width.

Pivot-minor and vertex-minor relations are graph containment relations such that rank-width cannot increase when taking pivot-minors or vertex-minors of a graph~\cite{Oum05}. Our main result is that for every graph $G$ with rank-width at most $k$ and $\abs{V(G)}\geq 3$, there exists a graph $H$ having $G$ as a pivot-minor such that $H$ has tree-width at most $2k$ and $\abs{V(H)}\leq (2k+1)\abs{V(G)}-6k$. Furthermore, we prove that for every graph $G$ with linear rank-width at most $k$ and $\abs{V(G)}\geq 3$, there exists a graph $H$ having $G$ as a pivot-minor such that $H$ has path-width at most $k+1$ and $\abs{V(H)}\leq (2k+1)\abs{V(G)}-6k$.

As a corollary, we give new characterizations of two graph classes:
graphs with rank-width at most $1$ and graphs with linear rank-width
at most $1$. We show that a graph has rank-width at most $1$ if and
only if it is a vertex-minor of a tree. We also prove that a graph has
linear rank-width at most $1$ if and only if it is a vertex-minor of a
path. Moreover, if the graph is bipartite, we prove that a
vertex-minor relation can be replaced with a pivot-minor relation in
both theorems. Table~\ref{table} summarizes our theorems.
 
\begin{table} \label{table}
\begin{center}
\begin{tabular}[t]{|c c c|}
\hline
$G$ has rank-width $\leq k$ & $\Rightarrow$ & $G$ is a pivot-minor of \\  
& & a graph of tree-width $\leq 2k$ \\
\hline
$G$ has linear rank-width $\leq k$ & $\Rightarrow$ & $G$ is a pivot-minor of \\  
& & a graph of path-width $\leq k+1$ \\
\hline
$G$ has rank-width $\leq 1$  & $\Leftrightarrow$ & $G$ is a vertex-minor of a tree \\ 
$G$ has linear rank-width $\leq 1$ & $\Leftrightarrow$ & $G$ is a vertex-minor of a path \\
\hline

$G$ is bipartite and has rank-width $\leq 1$ & $\Leftrightarrow$ & $G$ is a pivot-minor of a tree \\
$G$ is bipartite and has linear rank-width $\leq 1$ & $\Leftrightarrow$ & $G$ is a pivot-minor of a path \\
\hline 
\end{tabular}
\end{center}
\caption{Summary of theorems}
\end{table}

To prove the main theorem, we construct a graph having $G$ as a pivot-minor, called a rank-expansion. Then we prove that a rank-expansion has small tree-width.

The paper is organized as follows. We present the definition of rank-width and related operations in the next section. In Section 3, we define a \emph{rank-expansion} of a graph and prove the main theorem. In Section 4, using a rank-expansion, we present new characterizations of graphs with rank-width at most~$1$ and graphs with linear rank-width at most~$1$.

\section{Preliminaries} \label{sec:prelim}

In this paper, all graphs are simple and undirected. Let $G=(V,E)$ be
a graph. For a vertex $v$ of $G$, let $N(v)$ be the set of vertices adjacent to
$v$ and 
let $\delta(v)$ be the set of edges incident with $v$.
The \emph{degree} of a vertex $v$, denoted by $\deg(v)$, is defined as $\deg(v):=\abs{\delta(v)}$. For $S\subseteq V$, $G[S]$ denotes the subgraph of $G$ induced on $S$. For two sets $A$ and $B$, $A\Delta B=(A\cup B)\setminus (A\cap B)$.

A \emph{vertex partition} of a graph $G$ is a pair $(A,B)$ of subsets of $V(G)$ such that $A\cup B=V(G)$ and $A\cap B=\emptyset$. A vertex $v\in V$ is a \emph{leaf} if $\deg(v)=1$; Otherwise we call it an \emph{inner vertex}. An edge $e\in E$ is an \emph{inner edge} if $e$ does not have a leaf as an end. Let $V_I(G)$ and $E_I(G)$ be the set of inner vertices of $G$ and inner edges of $G$, respectively.

For an $X\times Y$ matrix $M$ and subsets $A\subseteq X$ and
$B\subseteq Y$, $M[A,B]$ denotes the $A\times B$ submatrix
$(m_{i,j})_{i\in A,j\in B}$ of $M$. For $a\in A$ and $b\in B$, we
denote $M_{a,b}=M[\{a\},\{b\}]$. 
If $A=B$, then $M[A]=M[A,A]$ is called a \emph{principal submatrix} of $M$. The adjacency matrix of a graph $G$, which is a $(0,1)$-matrix over the binary field, will be denoted by $A(G)$.

\subsection*{Pivoting matrices.} Let $M=
\bordermatrix{
& X & V\setminus X\cr
X & A & B\cr
V\setminus X & C & D
}
$ be a $V\times V$ matrix over a field $F$. If $A=M[X]$ is nonsingular, then we define
\[ 
M\ast X=
\bordermatrix{
 & X & V\setminus X\cr
X & A^{-1} & A^{-1}B \cr
V\setminus X & -CA^{-1}  & D-CA^{-1}B 
}
\]

This operation is called a \emph{pivot}, sometimes called a
\emph{principal pivot transformation}~\cite{Tsatsomeros2000}.
Tucker showed the following theorem.
\begin{THM}[Tucker \cite{Tucker60}]\label{thm:21}
Let $M$ be a $V\times V$ matrix over a field.
If $M[X]$ is a nonsingular principal submatrix of $M$, 
then for every subset $Y$ of $V$, 
$(M\ast X)[Y]$ is nonsingular if and only if $M[X\Delta Y]$ is nonsingular.
\end{THM}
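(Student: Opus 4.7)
My plan is to recast the statement as a claim about projections of a single linear subspace of $F^V\oplus F^V$, which sidesteps any determinant sign bookkeeping and reduces the proof to index juggling.

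First I record a standard reformulation: for any $V\times V$ matrix $N$ and any $S\subseteq V$, the principal submatrix $N[S]$ is nonsingular if and only if the linear map
\[
\psi_{N,S}\colon F^V\to F^V,\qquad x\mapsto\bigl(x|_{V\setminus S},\ (Nx)|_S\bigr)
\]
is bijective. This is Cramer's rule: one can solve $(Nx)|_S=b$ for $x|_S$ with $x|_{V\setminus S}$ prescribed precisely when $N[S]$ is invertible.

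Next I verify that $M\ast X$ is obtained from $M$ by swapping input and output coordinates on $X$. From the block formula for $M\ast X$, a direct check shows: if $\tilde x$ is defined by $\tilde x|_X=(Mx)|_X$ and $\tilde x|_{V\setminus X}=x|_{V\setminus X}$, and $\tilde y$ by $\tilde y|_X=x|_X$ and $\tilde y|_{V\setminus X}=(Mx)|_{V\setminus X}$, then $\tilde y=(M\ast X)\tilde x$. Moreover, since $M[X]$ is invertible, the assignment $x\mapsto\tilde x$ is a bijection $F^V\to F^V$.

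Combining these: applying the first observation with $N=M\ast X$ and $S=Y$, $(M\ast X)[Y]$ is nonsingular iff $\psi_{M\ast X,Y}$ is bijective. I then compose with the bijection $x\mapsto\tilde x$ and split
\[
Y=(X\cap Y)\cup(Y\setminus X),\qquad V\setminus Y=(X\setminus Y)\cup\bigl(V\setminus(X\cup Y)\bigr).
\]
Substituting the definitions of $\tilde x$ and $(M\ast X)\tilde x$ on each block, the composite map reads off, from $x$, the coordinates $x|_{X\cap Y}$ and $x|_{V\setminus(X\cup Y)}$, and from $Mx$, the coordinates $(Mx)|_{X\setminus Y}$ and $(Mx)|_{Y\setminus X}$. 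The two index sets collected on the $x$-side and the $Mx$-side are respectively $V\setminus(X\Delta Y)$ and $X\Delta Y$, so up to reordering of coordinates the composite map is precisely $\psi_{M,\,X\Delta Y}$. Applying the first observation once more, with $N=M$ and $S=X\Delta Y$, gives the theorem.

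The only genuine difficulty is the bookkeeping in the last step: one must track how the swap-at-$X$ redistributes the blocks of $Y$ and $V\setminus Y$ between the input and output sides of $M$, and recognize that the scrambled output index set is exactly the symmetric difference $X\Delta Y$. Once the linear-subspace viewpoint is set up, no further computation is required.
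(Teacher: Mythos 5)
The paper does not reproduce a proof of this theorem; it cites Bouchet's argument as presented in Geelen's thesis, and your subspace/coordinate-swap proof is essentially that argument, correctly executed: the reformulation of nonsingularity of $N[S]$ via the map $\psi_{N,S}$, the use of the hypothesis that $M[X]$ is nonsingular to make $x\mapsto\tilde x$ bijective, and the final identification of the collected output indices with $X\Delta Y$ are all sound. One small caveat: with the sign convention the paper uses for $M\ast X$ (namely $+A^{-1}B$ and $-CA^{-1}$ in the off-diagonal blocks), your ``direct check'' gives $(M\ast X)(\tilde x)|_X = x|_X + 2A^{-1}B\,x|_{V\setminus X}$, so the clean swap identity $\tilde y=(M\ast X)\tilde x$ as you state it holds only in characteristic $2$ (the paper's case of interest) or for the standard convention with $-A^{-1}B$ and $+CA^{-1}$; over a general field one instead gets $(M\ast X)(-y|_X,\ x|_{V\setminus X})=(-x|_X,\ y|_{V\setminus X})$, and since flipping signs on some coordinates is a bijection, your argument survives unchanged.
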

\begin{proof}
  See Bouchet's proof in Geelen~\cite[Theorem 2.7]{Geelen1995}.
\end{proof}
The following thereom is well known, see Geelen~\cite[Theorem 2.8]{Geelen1995}.
For our purpose, we will only work on skew-symmetric matrices on the binary field
and in this case, it follows easily from Theorem~\ref{thm:21}.
\begin{THM}\label{thm:22}
  Let $M$ be a square matrix. If $M[X]$ and $M*X[Y]$ are nonsingular,
  then $(M*X)*Y=M*(X\Delta Y)$.
\end{THM}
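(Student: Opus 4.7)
The plan is to recast the principal pivot transformation as the image of a linear subspace under a specific involution, thereby reducing the identity to a coordinatewise composition of involutions.

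For any square matrix $M \in F^{V \times V}$, I would consider the graph subspace
\[
  W_M = \{(v, Mv) : v \in F^V\} \subseteq F^V \oplus F^V,
\]
of dimension $\abs{V}$, and for each $X \subseteq V$ define a linear involution $\sigma_X \colon F^V \oplus F^V \to F^V \oplus F^V$ that acts as the identity on the $i$th coordinate pair when $i \notin X$ and sends $(v_i, w_i) \mapsto (-w_i, -v_i)$ when $i \in X$. The crux will be the claim that if $M[X]$ is nonsingular, then $\sigma_X(W_M) = W_{M*X}$. To prove this I would take $(v, Mv) \in W_M$, apply $\sigma_X$ to obtain $(v', w')$ with $v'_X = -(Mv)_X$, $v'_{V\setminus X} = v_{V\setminus X}$, $w'_X = -v_X$, and $w'_{V\setminus X} = (Mv)_{V\setminus X}$, and then use the relation $M[X]\,v_X + M[X, V\setminus X]\,v_{V\setminus X} = (Mv)_X$ to solve for $v_X$, confirming block-by-block that $w' = (M*X)\,v'$ using the paper's sign convention for the definition of $M*X$.

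Granting the claim, Theorem~\ref{thm:22} will follow from the coordinatewise identity $\sigma_Y \circ \sigma_X = \sigma_{X \Delta Y}$: for $i \in X \cap Y$ both involutions are applied and cancel (since $\sigma_X$ restricted to the $i$th pair squares to the identity), for $i \in X \Delta Y$ exactly one is applied, and for $i \notin X \cup Y$ neither is applied. The hypothesis that $M[X]$ is nonsingular plus the claim give $\sigma_X(W_M) = W_{M*X}$; the hypothesis that $(M*X)[Y]$ is nonsingular plus the claim applied to $M*X$ give $\sigma_Y(W_{M*X}) = W_{(M*X)*Y}$; and Theorem~\ref{thm:21} ensures $M[X \Delta Y]$ is nonsingular, so the claim applied once more yields $\sigma_{X \Delta Y}(W_M) = W_{M*(X \Delta Y)}$. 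Combining,
\[
  W_{(M*X)*Y} = \sigma_Y\bigl(\sigma_X(W_M)\bigr) = \sigma_{X \Delta Y}(W_M) = W_{M*(X \Delta Y)},
\]
and since a matrix is determined by its graph subspace, one concludes $(M*X)*Y = M*(X \Delta Y)$.

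The main subtlety I expect is matching the sign convention of the paper's definition of the pivot (with $-CA^{-1}$ rather than the standard Schur-complement sign in the lower-left block) to the sign convention of $\sigma_X$. The choice $(v_i, w_i) \mapsto (-w_i, -v_i)$ is the unique involution that both squares to the identity — essential so that coordinates in $X \cap Y$ cancel in the composition — and that reproduces the pivot formula as defined. Once these signs are synchronized, the remaining computations are routine $2 \times 2$ block-matrix manipulations, and the nonsingularity hypotheses together with Tucker's theorem discharge every well-definedness check that arises along the way.
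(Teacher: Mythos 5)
Your argument is correct, and it takes a genuinely different route from the paper, which offers no self-contained proof of Theorem~\ref{thm:22}: the authors cite Geelen's thesis and remark that for the matrices they actually use (symmetric, zero-diagonal, over the binary field) the identity ``follows easily'' from Theorem~\ref{thm:21} --- the point being that such a matrix is an adjacency matrix, hence determined by which $2\times 2$ principal submatrices are nonsingular, and Tucker's theorem applied twice shows $(M*X)*Y$ and $M*(X\Delta Y)$ have the same nonsingular principal submatrices. Your graph-subspace argument instead proves the statement as written, for arbitrary square matrices over arbitrary fields. I verified the crux: with the paper's sign convention ($-CA^{-1}$ in the lower-left block), the involution $(v_i,w_i)\mapsto(-w_i,-v_i)$ on coordinates in $X$ does carry $W_M$ into $W_{M*X}$ (the plain swap $(v_i,w_i)\mapsto(w_i,v_i)$ would instead produce $-A^{-1}B$ and $+CA^{-1}$ in the off-diagonal blocks, so your sign choice is the right one), and $\sigma_Y\circ\sigma_X=\sigma_{X\Delta Y}$ is immediate coordinatewise. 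Two points to make explicit in a full write-up: your block computation gives only the inclusion $\sigma_X(W_M)\subseteq W_{M*X}$, with equality following because $\sigma_X$ is a linear bijection and both subspaces have dimension $\abs{V}$; and the appeal to Theorem~\ref{thm:21} to guarantee that $M[X\Delta Y]$ is nonsingular is genuinely needed, since otherwise $M*(X\Delta Y)$ is not even defined. Your route buys generality and a proof from first principles; the paper's route buys brevity but covers only the special case it needs later.
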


\subsection*{Vertex-minors and pivot-minors.} The graph obtained from
$G=(V,E)$ by applying \emph{local complementation} at a vertex $v$
is \[G*v=(V,E\Delta \{ xy:xv,yv\in E ,x\neq y\} ).\] 
The graph obtained from $G$ by \emph{pivoting} an edge $uv$ is defined by $G \pivot uv=G*u*v*u$.

To see how we obtain the resulting graph by pivoting an edge $uv$, let
$V_1=N(u)\cap N(v)$, $V_2=N(u)\setminus( N(v)\cup \{v\})$ and
$V_3=N(v)\setminus (N(u)\cup \{u\})$. One can easily verify that
$G\pivot uv$ is identical to the graph obtained from $G$ by
complementing adjacency between vertices in distinct sets $V_i$ and $V_j$ and swapping the vertices $u$ and $v$~\cite{Oum05}. See Figure~\ref{fig:pivot} for example.

A graph $H$ is a \emph{vertex-minor} of $G$ if $H$ can be obtained from $G$ by applying a sequence of vertex deletions and local complementations. A graph $H$ is a \emph{pivot-minor} of $G$ if $H$ can be obtained from $G$ by applying a sequence of vertex deletions and pivoting edges. From the definition, every pivot-minor of a graph is a vertex-minor of the graph. Note that every pivot-minor of a bipartite graph is bipartite.

\begin{figure}[t]
\setlength{\unitlength}{.027in}
\begin{picture}(60,50)

\put(20,40){\line(1,0){20}}

\put(10,20){\circle*{3}}
\put(20,40){\circle*{3}}
\put(30,10){\circle*{3}}
\put(40,40){\circle*{3}}
\put(50,20){\circle*{3}}

\put(20,45){\makebox(0,0){$u$}}
\put(40,45){\makebox(0,0){$v$}}
\put(10,15){\makebox(0,0){$a$}}
\put(30,5){\makebox(0,0){$b$}}
\put(50,15){\makebox(0,0){$c$}}

\put(20,40){\line(-1,-2){10}}
\put(20,40){\line(1,-3){10}}

\put(40,40){\line(1,-2){10}}
\put(40,40){\line(-1,-3){10}}

\put(10,20){\line(2,-1){20}}
\put(7,2){\makebox(0,0){$G$}}
\end{picture}
\begin{picture}(60,50)
\put(51,2){\makebox(0,0){$G\pivot uv$}}

\put(40,40){\line(1,-2){10}}

\put(10,20){\circle*{3}}
\put(20,40){\circle*{3}}
\put(30,10){\circle*{3}}
\put(40,40){\circle*{3}}
\put(50,20){\circle*{3}}

\put(40,45){\makebox(0,0){$u$}}
\put(20,45){\makebox(0,0){$v$}}
\put(10,15){\makebox(0,0){$a$}}
\put(30,5){\makebox(0,0){$b$}}
\put(50,15){\makebox(0,0){$c$}}

\put(20,40){\line(-1,-2){10}}
\put(20,40){\line(1,-3){10}}
\put(20,40){\line(1,0){20}}

\put(40,40){\line(-1,-3){10}}

\put(10,20){\line(1,0){40}}
\put(50,20){\line(-2,-1){20}}

\end{picture}
\begin{picture}(60,50)
\put(51,2){\makebox(0,0){$G\pivot uv\pivot uc$}}

\put(40,40){\line(1,-2){10}}

\put(20,40){\line(1,0){20}}

\put(10,20){\circle*{3}}
\put(20,40){\circle*{3}}
\put(30,10){\circle*{3}}
\put(40,40){\circle*{3}}
\put(50,20){\circle*{3}}

\put(40,45){\makebox(0,0){$c$}}
\put(20,45){\makebox(0,0){$v$}}
\put(10,15){\makebox(0,0){$a$}}
\put(30,5){\makebox(0,0){$b$}}
\put(50,15){\makebox(0,0){$u$}}

\put(40,40){\line(-1,-3){10}}

\put(10,20){\line(1,0){40}}
\put(10,20){\line(2,-1){20}}
\put(50,20){\line(-2,-1){20}}

\end{picture}
\caption{Pivoting an edge $uv$. Note that $G\pivot uv\pivot uc=G\pivot vc$.}
\label{fig:pivot}\end{figure}

\subsection*{Pivoting in a graph is a special case of a matrix pivot.}
For a graph $G$, two vertices $u$ and $v$ are adjacent if and only if 
$\det(A(G)[\{u,v\}])\neq 0$.
This allows us to determine the graph from the list of nonsingular 
principal submatrices of $A(G)$.
If we are given the list of nonsingular principal submatrices of
$A(G)\ast X$, we can still recover the graph $G$ by Theorem~\ref{thm:21}.

In fact, if $uv\in E$, then $A(G\pivot uv)=A(G)\ast \{u, v\}$. This is
useful, because by Theorem~\ref{thm:22}, the adjacency matrix of
$H=G\pivot a_1b_1 \pivot \ldots  \pivot a_nb_n$ can be obtained by a
single pivot operation $A(G)\ast X$  where $X=\{a_1,b_1\}\Delta  \ldots  \Delta \{a_n,b_n\}$. Then $u$, $v$ are adjacent in $H$ if and only if $A(G)[X\Delta \{u,v\}]$ is nonsingular.

If $A(G)[X]$ is nonsingular, then we denote $G\pivot X$ as the graph
having the adjacency matrix $A(G)\ast X$. For $X\subseteq V(G)$, if $A(G)[X]$ is nonsingular, then we can obtain the graph $G\pivot X$ from $G$ by applying a sequence of pivoting edges, by Theorem~\ref{thm:21}. Thus, we deduce that $H$ is a pivot-minor of $G$ if and only if $H=G\pivot X\setminus Y$ where $X,Y\subseteq V(G)$ and $A(G)[X]$ is nonsingular.

\subsection*{Rank-width and linear rank-width.} The \emph{cut-rank} function $\cutrk_{G} :  2^{V} \rightarrow \mathbb{Z}$ of a graph $G=(V,E)$ is defined by 
\[
\cutrk_{G}(X)=\rank(A(G)[X,V\setminus X]).
\]

A tree is \emph{subcubic} if it has at least two vertices and every inner vertex has degree~$3$. A \emph{rank-decomposition} of a graph $G$ is a pair $(T,L)$, where $T$ is a subcubic tree and $L$ is a bijection from the vertices of $G$ to the leaves of $T$. For an edge $e$ in $T$, $T\setminus e$ induces a partition $(X_{e} ,Y_{e} )$ of the leaves of $T$. The \emph{width} of an edge $e$ is defined as $\cutrk_{G} (L^{-1}(X_{e} ))$. The \emph{width} of a rank-decomposition $(T,L)$ is the maximum width over all edges of $T$. The \emph{rank-width} of $G$, denoted by $\rw(G)$, is the minimum width of all rank-decompositions of $G$. If $\abs{V}\leq 1$, then $G$ admits no rank-decomposition and $\rw(G)=0$.

A tree is a \emph{caterpillar} if it contains a path $P$ such that every vertex of a tree has distance at most $1$ to some vertex of $P$. A \emph{linear rank-decomposition} of a graph G is a rank-decomposition $(T,L)$ of G, where $T$ is a caterpillar. The \emph{linear rank-width} of G is defined as the minimum width of all linear rank-decompositions of $G$. If $\abs{V}\leq 1$, then $G$ admits no linear rank-decomposition and $\lrw(G)=0$. Note that if a graph $H$ is a vertex-minor or a pivot-minor of a graph $G$, then $\rw(H)\leq \rw(G)$ and $\lrw(H)\leq \lrw(G)$~\cite{Oum05}. Trivially, $\rw(G)\leq \lrw(G)$.

\subsection*{Tree-width and path-width.} 
A \emph{tree-decomposition} of a graph $G=(V,E)$ is 
a pair $(T,B)$ 
of a tree $T$
and a family $B=\{B_t\}_{t\in V(T)}$ of vertex sets $B_t\subseteq V(G)$,
called \emph{bags},
satisfying the following three conditions:
\begin{enumerate}
\item[(T1)] $V(G)=\bigcup_{v\in V(T)}B_t$.
\item[(T2)] For every edge $uv$ of $G$, there exists a vertex $t$ of $T$ such that $u$, $v\in B_t$.
\item[(T3)] For $t_1$, $t_2$ and $t_3\in V(T)$, $B_{t_1}\cap B_{t_3}\subseteq B_{t_2}$ whenever $t_2$ is on the path from $t_1$ to $t_3$.
\end{enumerate}

The \emph{width} of a tree-decomposition $(T,B)$ is $\max\{ \abs{B_{t}}-1:t\in V(T)\}$. The \emph{tree-width} of $G$, denoted by $\tw(G)$, is the minimum width of all tree-decompositions of $G$. A \emph{path-decomposition} of a graph $G$ is a tree-decomposition $(T,B)$ where $T$ is a path. The \emph{path-width} of $G$, denoted by $\pw(G)$, is the minimum width of all path-decompositions of $G$.

\section{Rank-expansions and pivot-minors of graphs with small tree-width} \label{sec:rankexpansion}

In this section, we aim to construct, for a graph $G$ of rank-width $k$, a
bigger graph having tree-width at most $2k$ such that it has 
a pivot-minor isomorphic to $G$.

\begin{THM}\label{thm:main1}
Let $k$ be a non-negative integer. Let $G$ be a graph of rank-width at
most $k$ such that $\abs{V(G)}\geq 3$. Then there exists a graph $H$ having a pivot-minor isomorphic to $G$ such that tree-width of $H$ is at most $2k$ and $\abs{V(H)}\leq (2k+1)\abs{V(G)}-6k$.
\end{THM}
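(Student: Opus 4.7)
The plan is to construct a \emph{rank-expansion} $H$ of $G$ from a rank-decomposition $(T,L)$ of width at most $k$, and then verify the size bound, the tree-width bound, and the pivot-minor relation. I would normalize $(T,L)$ so that every internal vertex of $T$ has degree exactly $3$. Setting $n=\abs{V(G)}$, the tree $T$ then has $n-2$ internal vertices, $2n-3$ edges, and exactly $n-3$ \emph{inner edges} (edges with no leaf endpoint). Since $n+2k(n-3)=(2k+1)n-6k$, the plan is to insert $2k$ new \emph{auxiliary vertices} per inner edge of $T$, which already gives the claimed size bound.

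For each inner edge $e$ of $T$, the cut $(X_e, V(G)\setminus X_e)$ induced by $e$ has cut-rank $r_e\le k$, so $A(G)[X_e, V(G)\setminus X_e]$ factors as $P_e Q_e^T$ over $GF(2)$ with $P_e$ and $Q_e$ each having $k$ columns (padding with zero columns if $r_e<k$). The gadget inserted on $e$ consists of $2k$ vertices $u_e^1, w_e^1, \ldots, u_e^k, w_e^k$ forming the matching $\{u_e^i w_e^i : 1\le i\le k\}$, where $u_e^i$ is adjacent to $a\in X_e$ iff $(P_e)_{a,i}=1$ and $w_e^i$ is adjacent to $b\in V(G)\setminus X_e$ iff $(Q_e)_{b,i}=1$. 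A natural tree-decomposition of $H$ indexed by $T$---setting $B_t$ to contain the gadget vertices on inner edges incident to $t$, together with $L^{-1}(t)$ if $t$ is a leaf---has each bag of size at most $2k+1$ after a standard refinement splitting each internal node into up to three nodes (one per incident cut), giving $\tw(H)\le 2k$. The linear rank-width variant follows by the same construction when $T$ is chosen to be a caterpillar, yielding a path-decomposition.

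For the pivot-minor claim, I would take $X$ to be the union of all auxiliary matching edges, viewed as a vertex set of size $2k(n-3)$. The matching edges live on pairwise disjoint vertex pairs, so $A(H)[X]$ is block-diagonal with $k(n-3)$ nonsingular $2\times 2$ blocks and hence nonsingular over $GF(2)$. By Theorem~\ref{thm:22}, $H\pivot X$ is well-defined, and a block computation $A(H)\ast X = D - C A^{-1} B$ shows that each gadget contributes exactly $P_e Q_e^T = A(G)[X_e, V(G)\setminus X_e]$ to the $(X_e, V(G)\setminus X_e)$-block of the output. Deleting the auxiliary vertices would then leave $G$ as a pivot-minor of $H$.

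The main obstacle is arranging the residual original-edge structure inside $H$ so that the cut contributions assemble correctly to $A(G)$ over $GF(2)$. Two original vertices $u,v$ lie on opposite sides of \emph{every} inner edge on the $L(u)$--$L(v)$ path in $T$; a naive encoding therefore sums the same entry $A(G)_{u,v}$ once per such inner edge, a count that may well be even and so give the wrong answer. The construction must either assign each edge of $G$ to a unique cut and encode only the residual matrix at each $e$, or retain certain $G$-edges directly in $H$ to correct the parity. Verifying the final identity $A(H)\ast X|_{V(G)} = A(G)$ is the technical heart of the proof, and requires careful bookkeeping with Theorem~\ref{thm:22} applied inductively along $T$.
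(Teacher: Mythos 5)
Your overall strategy --- one gadget of at most $2k$ auxiliary vertices per inner edge of $T$, joined by a perfect matching on which you pivot before deleting the auxiliary vertices --- is the same as the paper's, and your size count is correct. But the construction you actually propose has two genuine gaps, one of which you flag yourself and neither of which is routine to repair. First, as you note, attaching $u_e^i$ directly to the vertices of $X_e$ and $w_e^i$ directly to those of $V(G)\setminus X_e$ makes the pivoted matrix accumulate one copy of $A(G)[X_e,V(G)\setminus X_e]$ for \emph{every} inner edge $e$ separating $u$ from $v$, so over $GF(2)$ the $(u,v)$ entry comes out as the number of such inner edges times $A(G)_{u,v}$, which is wrong whenever that count is even; this is not bookkeeping but the point at which the construction must be redesigned. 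Second --- and this you do not flag --- the same direct attachment destroys your tree-width bound: the edge $u_e^i a$ for a vertex $a\in X_e$ whose leaf $L(a)$ is far from $e$ in $T$ is a long-range edge, and the bags you describe (gadget vertices on incident inner edges, plus $L^{-1}(t)$ at leaves) never contain both of its endpoints, violating condition (T2). Covering these edges forces bags of unbounded size.

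The missing idea is locality. The paper orients $T$ away from a fixed leaf, chooses for each edge $e$ a set $U_e\subseteq A_e$ indexing a row basis of the cut matrix, \emph{nested} so that $U_e\cap A_f\subseteq U_f$ whenever $f$ is a child edge of $e$ (Lemma~\ref{lem:basis}), and then connects the gadget on $e$ only to the gadgets on edges adjacent to $e$ in $T$: via the change-of-basis matrices $C_f=P_e[U_f,U_e]$ between a parent and its children, and via genuine $G$-adjacency only between the two child gadgets at each internal node. Every edge of the resulting graph $H$ is then local to a single internal vertex of $T$ or a single tree edge, which is what makes the width-$2k$ tree-decomposition possible; and the adjacency of $\overline{a}$ and $\overline{b}$ in $H\pivot\overline{E_I(T)}$ reduces (Lemmas~\ref{lem:ignore2}--\ref{lem:det}) to the determinant of a block-banded matrix along the $L(a)$--$L(b)$ path, which telescopes to the single product $C_{e_0}\cdots C_{e_{n-1}} M C^t_{f_{m-1}}\cdots C^t_{f_0}=\left(A(G)\right)_{a,b}$ rather than to a sum over cuts. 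You would also need to treat disconnected $G$ separately (the construction needs $U_e\neq\emptyset$, hence connectivity), which the paper handles by induction on components.
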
 
 
For a graph of small linear rank-width, we can find a bigger graph
having small path-width instead of tree-width and 
reduce the upper bound on the path-width of a bigger graph as follows.

\begin{THM}\label{thm:main2}
Let $k$ be a non-negative integer. Let $G$ be a graph of linear rank-width at most $k$ and $\abs{V(G)}\geq 3$. Then there exists a graph $H$ having a pivot-minor isomorphic to $G$ such that path-width of $H$ is at most $k+1$ and $\abs{V(H)}\leq (2k+1)\abs{V(G)}-6k$. 
\end{THM}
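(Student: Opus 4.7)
The plan is to reuse the rank-expansion construction from Theorem~\ref{thm:main1}. Since $\lrw(G)\le k$, I start from a linear rank-decomposition $(T,L)$ of $G$ of width at most $k$; the underlying tree $T$ is then a caterpillar. I would form the same rank-expansion $H$ of $G$ relative to $(T,L)$ that is used for the proof of Theorem~\ref{thm:main1}. Two of the three required conclusions come for free from the general proof: $H$ has $G$ as a pivot-minor, and the vertex count $\abs{V(H)}\le (2k+1)\abs{V(G)}-6k$ depends only on combinatorial data of $T$ that is the same for a caterpillar as for any subcubic tree on the same leaf set.

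The new ingredient is a path-decomposition of $H$ of width at most $k+1$, which exploits the caterpillar structure of $T$. Let $p_1 p_2 \cdots p_m$ be the spine of $T$; every non-spine vertex of $T$ is a leaf attached to some $p_i$ and corresponds to a vertex of $G$. For each spine edge $e_i=p_ip_{i+1}$, removing $e_i$ splits the leaves of $T$ into two sets whose induced cut-rank in $G$ is at most $k$, and so the rank-expansion encodes this cut by a set $C_i\subseteq V(H)$ of at most $k$ gadget vertices. I would list the bags along the spine from left to right, introducing and forgetting vertices one at a time, so that the cross-section between consecutive positions along the spine is exactly $C_i$ (of size at most $k$), and so that any transitional bag that holds $C_{i-1}$ or $C_i$ together with the single leaf of $G$ attached at $p_i$ has size at most $k+2$, i.e.\ width at most $k+1$.

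The main obstacle will be verifying axiom~(T3): every vertex of $H$ must occupy a contiguous block of bags. For vertices of $V(G)$ this is immediate, since each is a leaf of $T$ attached to a unique spine node. For gadget vertices of $C_i$ the argument requires unpacking the rank-expansion and showing that such a vertex meets only vertices lying in bags for spine positions in a neighborhood of $i$, so it can be forgotten shortly after the bag for $p_{i+1}$ is processed. The gain from the $2k$ bound of Theorem~\ref{thm:main1} to the $k+1$ bound here comes precisely from the caterpillar shape of $T$: along a caterpillar, only two consecutive gadget sets $C_{i-1}$ and $C_i$ ever need to coexist briefly, whereas at degree-$3$ inner nodes of a general subcubic tree three gadget sets can pile up, forcing the larger width.
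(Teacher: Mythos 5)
Your proposal matches the paper's argument: the paper also reuses the rank-expansion built from a linear rank-decomposition $(T,L)$ (rooting at a leaf $x$ at the end of a longest path of the caterpillar) and, in the ``moreover'' part of Proposition~\ref{prop:treewidth}, constructs exactly the sliding-window path-decomposition you describe, where the bags at a spine node have size at most $\abs{U_{f_1}}+\abs{U_{f_2}}+1\le k+2$ because one of the two outgoing edges of each inner node of a caterpillar leads to a leaf of $T$ and hence carries a gadget of size $1$. The contiguity check (T3) that you flag as the remaining obstacle is carried out explicitly there, so your sketch is the same proof with those details deferred.
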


To prove these two theorems, we need the following simple lemma on
linear algebra.

\begin{LEM}\label{lem:basis}
Let $G$ be a graph and $(A_1,B_1)$, $(A_2,B_2)$ be two vertex
partitions of $G$ such that $A_2\subseteq A_1$. Let $S\subseteq A_1$ be a set corresponding to a basis of row vectors in $A(G)[A_1,B_1]$. Then there exists a subset of $A_2$ representing a basis of row vectors in $A(G)[A_2,B_2]$ containing $S\cap A_2$. 
\end{LEM}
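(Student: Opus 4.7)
The plan is to reduce the lemma to the standard fact that a linearly independent set in a vector space extends to a basis. The only nontrivial content is verifying that the rows of $A(G)[A_2,B_2]$ indexed by $S\cap A_2$ are already linearly independent; then the extension to a basis is immediate.

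First I would record the one structural observation we need: since $(A_1,B_1)$ and $(A_2,B_2)$ are vertex partitions of $V(G)$ and $A_2\subseteq A_1$, we have $B_1\subseteq B_2$. Consequently the matrix $A(G)[S\cap A_2, B_1]$ sits simultaneously as a row-submatrix of $A(G)[A_1,B_1]$ and as a column-submatrix of $A(G)[A_2,B_2]$. This common submatrix is the bridge between the two bases.

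The heart of the argument is then a two-step passage. Because the rows of $A(G)[A_1,B_1]$ indexed by $S$ form a basis of its row space, any subcollection of them is linearly independent; in particular, the rows indexed by $S\cap A_2$ are linearly independent. Discarding the remaining rows gives that the rows of $A(G)[S\cap A_2, B_1]$ are linearly independent. Next, enlarging the column set from $B_1$ to $B_2$ can only preserve row independence, since appending coordinates to a linearly independent family of vectors keeps it linearly independent. Hence the rows of $A(G)[A_2,B_2]$ indexed by $S\cap A_2$ are linearly independent.

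Finally, by the standard exchange lemma, any linearly independent set of rows of $A(G)[A_2,B_2]$ extends to a basis of its row space, so we can adjoin further rows indexed by elements of $A_2\setminus S$ to produce a set $T\subseteq A_2$ with $S\cap A_2\subseteq T$ whose corresponding rows form a basis of the row space of $A(G)[A_2,B_2]$. There is no genuine obstacle here; the only care required is to keep track of which index sets are being restricted at each step, which I would handle by explicitly naming the intermediate matrix $A(G)[S\cap A_2,B_1]$ as above.
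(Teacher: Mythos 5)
Your proof is correct and follows the same route as the paper's: observe that $B_1\subseteq B_2$, note that the rows of $A(G)[S\cap A_2,B_2]$ are linearly independent (a subfamily of independent rows, with columns only added), and extend to a basis. You merely spell out the intermediate steps that the paper's one-line argument leaves implicit.
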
  

\begin{proof}
Because $A_2\subseteq A_1$, row vectors in $A(G)[S\cap A_2,B_2]$ are
linearly independent. Therefore we can extend $S\cap A_2$ to a basis of rows in $A(G)[A_2,B_2]$. 
\end{proof}

\begin{figure}[t]
\setlength{\unitlength}{.03in}
\begin{picture}(60,60)

\put(10,30){\circle*{2.7}}
\put(20,50){\circle*{2.7}}
\put(20,10){\circle*{2.7}}
\put(30,25){\circle*{2.7}}
\put(40,50){\circle*{2.7}}
\put(40,10){\circle*{2.7}}
\put(50,30){\circle*{2.7}}

\put(20,55){\makebox(0,0){$a_1$}}
\put(5,30){\makebox(0,0){$a_2$}}
\put(20,5){\makebox(0,0){$a_3$}}
\put(30,20){\makebox(0,0){$a_4$}}
\put(40,55){\makebox(0,0){$a_5$}}
\put(55,30){\makebox(0,0){$a_6$}}
\put(40,5){\makebox(0,0){$a_7$}}

\put(20,50){\line(-1,-2){10}}
\put(20,50){\line(1,0){20}}
\put(20,50){\line(2,-5){10}}
\put(10,30){\line(4,-1){20}}
\put(10,30){\line(1,-2){10}}
\put(20,10){\line(1,0){20}}
\put(30,25){\line(4,1){20}}
\put(20,10){\line(2,3){10}}
\put(40,50){\line(1,-2){10}}
\put(40,10){\line(1,2){10}}
\put(20,50){\line(3,-2){30}}
\put(20,10){\line(3,2){30}}
\end{picture}
\begin{picture}(100,60)

\put(5,10){\circle*{2.7}}
\put(5,40){\circle*{2.7}}
\put(15,25){\circle*{2.7}}
\put(30,37){\circle*{2.7}}

\put(45,40){\circle*{2.7}}
\put(37,52){\circle*{2.7}}
\put(53,52){\circle*{2.7}}
\put(60,25){\circle*{2.7}}
\put(75,40){\circle*{2.7}}
\put(75,10){\circle*{2.7}}

\put(5,45){\makebox(0,0){$L(a_1)$}}
\put(5,5){\makebox(0,0){$x=L(a_2)$}}
\put(30,42){\makebox(0,0){$L(a_3)$}}
\put(36,57){\makebox(0,0){$L(a_4)$}}
\put(54,57){\makebox(0,0){$L(a_5)$}}
\put(75,45){\makebox(0,0){$L(a_6)$}}
\put(75,5){\makebox(0,0){$L(a_7)$}}

\put(5,10){\line(2,3){10}}
\put(15,25){\line(1,0){15}}
\put(30,25){\line(1,0){15}}
\put(45,25){\line(1,0){15}}
\put(60,25){\line(1,-1){15}}
\put(5,40){\line(2,-3){10}}
\put(30,25){\line(0,1){12}}
\put(45,25){\line(0,1){15}}
\put(45,40){\line(-2,3){8}}
\put(45,40){\line(2,3){8}}
\put(60,25){\line(1,1){15}}

\put(30,25){\vector(1,0){10}}
\put(30,25){\line(1,0){15}}

\put(5,10){\vector(2,3){5}}
\put(15,25){\vector(-2,3){5}}
\put(15,25){\vector(1,0){8}}
\put(30,25){\vector(0,1){7}}
\put(45,25){\vector(1,0){8}}
\put(45,25){\vector(0,1){8}}
\put(45,40){\vector(-2,3){5}}
\put(45,40){\vector(2,3){5}}
\put(60,25){\vector(1,1){7}}
\put(60,25){\vector(1,-1){7}}

\put(30,25){\circle*{2.7}}
\put(45,25){\circle*{2.7}}

\put(30,20){\makebox(0,0){$w$}}
\put(38,28){\makebox(0,0){$e$}}
\put(45,20){\makebox(0,0){$v$}}
\put(48,33){\makebox(0,0){$f_1$}}
\put(53,28){\makebox(0,0){$f_2$}}
\put(23,28){\makebox(0,0){$d$}}

\end{picture}
\caption{A graph $G$ and a rank-decomposition $(T,L)$ of $G$ with a fixed leaf $x\in V(T)$. Note that the edge $e\in E(T)$ has width $3$ and $e$ is directed from $w$ to $v$.}
\label{fig:graph}
\end{figure}
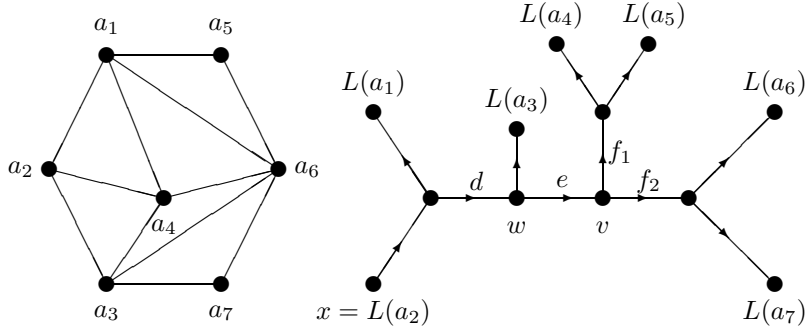

\subsection{Construction of a rank-expansion.}
To prove Theorems \ref{thm:main1} and \ref{thm:main2}, we construct a
\emph{rank-expansion} of a graph as follows. Let $G$ be a connected
graph and $(T,L)$ be a rank-decomposition of $G$ having width at most
$k$. 
We fix a leaf $x\in
V(T)$. For $e\in E(T)$, let $T_e$ be the component of $T\setminus e$
which does not contain $x$, and let $A_e=L^{-1}(V(T_e))$,
$B_e=V(G)\setminus A_e$ and $M_e=A(G)[A_e,B_e]$. For each $a\in A_e$,
let $R^e_a=M_e[\{a\},B_e]$ be the row vector of $M_e$ corresponding to $a$.

First, we orient each edge of $T$ away from $x$. By
Lemma~\ref{lem:basis}, we can choose a vertex
set $U_e\subseteq A_e$ for each edge $e$ of $T$ satisfying the
following two conditions:
\begin{enumerate}
\item $\{R^e_w\}_{w\in U_e}$ forms a basis of row vectors in~$M_e$
  for each edge $e$ of $T$.
\item $(U_e\cap A_f)\subseteq U_f$ if the tail
of an edge $f$ is the head of $e$.
\end{enumerate}
Since $(T,L)$ has width at most $k$, we have $\abs{U_e}\le k$ for each
edge $e$ of $T$.
 Since $R^e_a$ can be uniquely
expressed as a linear combination of vectors in $\{R^e_w\}_{w\in U_e}$
for each $a\in A_e$, there exists a unique $A_e\times U_e$ matrix
$P_e$ such that $P_e(A(G)[U_e,B_e])=A(G)[A_e,B_e]$. 

For example, in Figure~\ref{fig:graph}, 
\[A(G)[A_e,B_e]= 
\bordermatrix{
&a_1&a_2&a_3\cr
a_4&1&1&1\cr
a_5&1&0&0\cr
a_6&1&0&1 \cr
a_7&0&0&1 }
\]
and $\{R^e_{a_4},R^e_{a_5},R^e_{a_7} \}$  forms a basis of row vectors of $A(G)[A_e,B_e]$. So, if we let $U_e=\{a_4,a_5,a_7 \}$, then 
\[P_e= 
\bordermatrix{
&a_4&a_5&a_7\cr 
a_4&1&0&0\cr
a_5&0&1&0\cr
a_6&0&1&1 \cr
a_7&0&0&1 \cr
}
\]
and we easily verify that $P_eA(G)[U_e,B_e]=A(G)[A_e,B_e]$. 

If the tail of an edge $f$ is the head of an edge $e$, then let $C_f=P_e[U_f,U_e]$. We will use the property that if $e_{n+1}e_n \ldots e_1$ is a directed path in $T$, then \[C_{e_1}C_{e_2} \ldots C_{e_n}=P_{e_{n+1}}[U_{e_1},U_{e_{n+1}}].\]
A \emph{rank-expansion} $\boldsymbol{R}(G,T,L,x,\{U_f\}_{f\in E(T)})$
of a graph $G$
is a graph $H$ such that 
\begin{align*}
V(H)&=\bigcup_{v\in V_I(T)} S_v
\quad\text{ where }S_v=\bigcup_{e\in \delta(v)}(U_e\times \{e\} \times
\{v\}) \text{ for each }v\in V_I(T), \\
E(H)&=\{ \{(a,e,v),(a,e,w)\}:e=vw\in E_I(T), a\in U_e\} \\
    &\, \cup\{ \{(a,e,v),(b,f,v)\}: v\in V_I(T), e, f\in E(T),  v\text{ is the head of } e\text{ and the tail of } f,\\
    &\qquad\qquad\qquad\qquad\qquad\  a\in U_f, b\in U_e \text{ and } (C_f)_{a,b}\neq 0\} \\
    &\, \cup\{ \{(a,f_1,v),(b,f_2,v)\}: v\text{ is the tail of both } f_1 \text{ and } f_2\in E(T), \\
    &\,\,\,\,\qquad\qquad\qquad\qquad\qquad\ a\in U_{f_1}, b\in
    U_{f_2}\text{ and }ab\in E(G)\}.
\end{align*}
(The sets $V_I(T), E_I(T)$ are defined in the beginning of Section~\ref{sec:prelim}.)

For $e=vw\in E_I(T)$, let $\overline{e}=\{(a,e,v): a\in U_e\}\cup \{(a,e,w): a\in U_e\}\subseteq V(H)$ and for $W\subseteq E_I(T)$, let $\overline{W}=\bigcup_{e\in W} \overline{e}\subseteq V(H)$. 
If $e\in E_I(T)$ is directed from $w$ to $v$, let $L_e=S_v\cap
\overline{e}$ and $R_e=S_w\cap \overline{e}$. 
For a vertex $a$ in $V(G)$,
$T$ has a unique edge $e$ incident with $L(a)$ and some vertex $v$ of
$T$
and we write  $\overline{a}$ to denote  the unique
vertex in $U_e\times \{e\}\times \{v\}$ and let
$\overline{e}:=\overline{a}$.  Notice that since $G$ is connected,
$U_e$ is nonempty.

We discuss the number of vertices in the rank-expansion $H$. We easily observe that $\abs{E_I(T)}=\abs{V(G)}-3$. So if $\rw(G)\leq k$, then $\abs{\overline{e}}\leq 2k$ for each $e\in E_I(T)$, and we deduce that $\abs{V(H)}\leq 2k\abs{E_I(T)}+\abs{V(G)}=2k(\abs{V(G)}-3)+\abs{V(G)}=(2k+1)\abs{V(G)}-6k$.

\begin{figure}[t]
 \tikzstyle{v}=[circle, draw, solid, fill=black, inner sep=0pt, minimum width=3pt]
\begin{tikzpicture}[scale=0.056]
\node at (21,75) {$U_d=\{a_4, a_5\}$};
\node at (21,67) {$U_e=\{a_4, a_5, a_7\}$};
\node at (19,59) {$U_{f_1}=\{a_4, a_5\}$};
\node at (19,51) {$U_{f_2}=\{a_6, a_7\}$};

\draw (125,26) -- (145,45);
\draw (125,18) -- (153,45);
\draw (125,10) -- (173,10);

\draw (48,18) -- (20,3) -- (20,25) -- (48,10);
\draw (20,25) -- (48,18);

\node [left] at (20,25) {$\overline{a_1}$};
\node [left] at (20,3) {$\overline{a_2}(a_4)$};
\node [v] at (20,25) {};
\node [v] at (20,3) {};


\draw [blue][very thick](48,10) -- (63,10);
\draw [blue][very thick](48,18) -- (63,18);

\node [v] at (48,10) {}; 
\node [v] at (48,18) {};
\node [v] at (63,10) {}; \node [v] at (63,18) {};

\node at (55,21) {$a_4$};
\node at (55,13) {$a_5$};
\node [above] at (87,45) {$\overline{a_3}$};
\node [v] at (87,45) {};
\node at (55,2) {$\overline{d}$};

\draw (63,18) -- (87,45) -- (63,10);
\draw (110,26) -- (87,45) -- (110,10);
\draw (63,18) -- (110,26);
\draw (63,10) -- (110,18);

\draw [blue][very thick] (110,10) -- (125,10);
\draw [blue][very thick] (110,18) -- (125,18);
\draw [blue][very thick] (110,26) -- (125,26);
\draw [blue][very thick] (145,45) -- (145,60);
\draw [blue][very thick] (153,45) -- (153,60);
\draw [blue][very thick] (173,10) -- (188,10);
\draw [blue][very thick] (173,18) -- (188,18);

\node [v] at (110,10) {}; \node [v] at (110,18) {}; \node [v] at (110,26) {};
\node [v] at (125,10) {}; \node [v] at (125,18) {}; \node [v] at (125,26) {};
\node [v] at (145,45) {}; \node [v] at (153,45) {};
\node [v] at (145,60) {}; \node [v] at (153,60) {};
\node [v] at (173,10) {}; \node [v] at (173,18) {};
\node [v] at (188,10) {}; \node [v] at (188,18) {};

\node at (117,29) {$a_4$};
\node at (117,21) {$a_5$};
\node at (117,13) {$a_7$};
\node at (180,21) {$a_6$};
\node at (180,13) {$a_7$};
\node at (141,53) {$a_4$};
\node at (150,53) {$a_5$};

\node at (117,2) {$\overline{e}$};
\node at (180,2) {$\overline{f_2}$};
\node at (162,53) {$\overline{f_1}$};
\node at (110,35) {$R_e$};
\node at (125,35) {$L_e$};

\draw (107.5,7.5) rectangle (112.5,30.5);
\draw (122.5,7.5) rectangle (127.5,30.5);

\draw (145,45)-- (173,18);
\draw (153,45)-- (173,18);
\draw (125,18)-- (173,18);
\draw (173,18)-- (125,10);

\node [left] at (137,80) {$\overline{a_4}$};
\node [right] at (159,80) {$\overline{a_5}$};
\node [v] at (137,80) {};
\node [v] at (159,80) {};

\draw (145,60)-- (137,80);
\draw (153,60)-- (159,80);

\node [right] at (216,3) {$\overline{a_7}$};
\node [right] at (216,25) {$\overline{a_6}$};
\node [v] at (216,3) {};
\node [v] at (216,25) {};

\draw (188,10)-- (216,3);
\draw (188,18)-- (216,25)-- (216,3);
\end{tikzpicture}
\caption{A rank-expansion of the graph $G$ in Figure~\ref{fig:graph}. }
\label{fig:exp}
\end{figure}
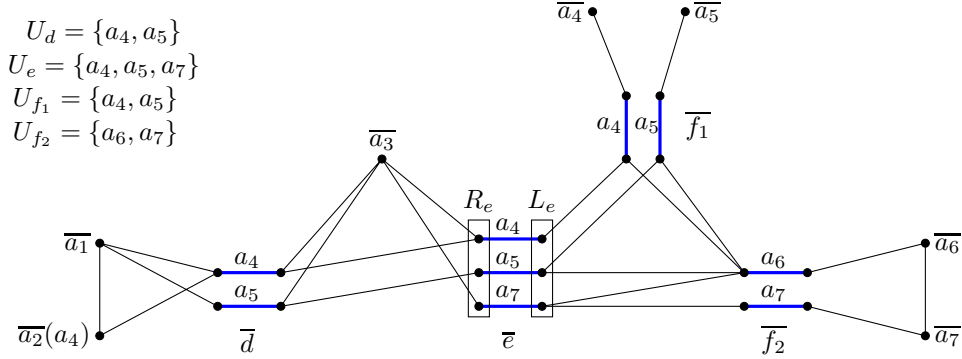

\subsection{A graph is a pivot-minor of its rank-expansion.}
First, we prove that every rank-expansion of a graph $G$ has a
pivot-minor isomorphic to $G$. To obtain $G$ as a pivot-minor of a
rank-expansion $H$,
we will prove that $H\pivot \overline{E_I(T)}$ has an induced subgraph
isomorphic to $G$. We first need to verify that
$A(H)[\overline{E_I(T)}]$ is nonsingular in order to apply the matrix pivot.

\begin{LEM}\label{lem:deg1}
Let $G$ be a graph and $uv\in E(G)$. If $\deg(u)=1$, then $G\pivot uv\setminus \{u,v\}=G\setminus \{u,v\}$.
\end{LEM}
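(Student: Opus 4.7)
The plan is to unpack the description of the pivot operation given earlier in the excerpt. Recall that the paper describes $G\pivot uv$ as follows: partition $V(G)\setminus\{u,v\}$ into $V_1=N(u)\cap N(v)$, $V_2=N(u)\setminus(N(v)\cup\{v\})$, and $V_3=N(v)\setminus(N(u)\cup\{u\})$; then $G\pivot uv$ is obtained from $G$ by complementing adjacency between vertices lying in \emph{different} classes $V_i$, $V_j$ and by swapping the labels of $u$ and $v$.

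First I would use the hypothesis $\deg(u)=1$ together with $uv\in E(G)$ to conclude $N(u)=\{v\}$. This immediately forces $V_1=N(u)\cap N(v)\subseteq\{v\}\setminus\{v\}=\emptyset$ (since $v\notin N(v)$) and $V_2=N(u)\setminus(N(v)\cup\{v\})=\emptyset$. Thus at most one of the three classes $V_1, V_2, V_3$ is nonempty, namely $V_3=N(v)\setminus\{u\}$.

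Next, since the adjacency-toggling rule for pivoting only affects pairs lying in \emph{distinct} nonempty classes, no edge of $G$ between vertices in $V(G)\setminus\{u,v\}$ is toggled. Consequently $G\pivot uv$ and $G$ agree on the induced subgraph on $V(G)\setminus\{u,v\}$, up to the relabeling that swaps $u$ and $v$. After deleting both $u$ and $v$, this relabeling is irrelevant, and we obtain $G\pivot uv\setminus\{u,v\}=G\setminus\{u,v\}$, as required.

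There is essentially no obstacle here; the only thing to be careful about is to justify cleanly that exactly one of the three classes is nonempty so that the ``complement between distinct classes'' rule does nothing on $V(G)\setminus\{u,v\}$. The proof will therefore be a short direct verification from the pivot description, with no appeal to Theorems~\ref{thm:21} or~\ref{thm:22} needed.
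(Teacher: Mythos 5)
Your argument is correct and is exactly the verification the paper leaves implicit: its entire proof is ``It is clear from the definition,'' and the intended reasoning is precisely your observation that $N(u)=\{v\}$ forces $V_1=V_2=\emptyset$, so the complementation rule toggles nothing among the vertices of $V(G)\setminus\{u,v\}$. You have simply written out the details the authors omitted.
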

\begin{proof}
It is clear from the definition.
\end{proof}

\begin{LEM}\label{lem:ignore1}
The matrix $A(H)[\overline{E_I(T)}]$ is nonsingular.
\end{LEM}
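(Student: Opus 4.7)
The plan is to partition $\overline{E_I(T)}$ into the ``head side'' $L := \bigcup_{e \in E_I(T)} L_e$ and the ``tail side'' $R := \bigcup_{e \in E_I(T)} R_e$. Since $\abs{L_e} = \abs{R_e} = \abs{U_e}$, the two halves have the same size, and with respect to this partition the symmetric matrix $A(H)[\overline{E_I(T)}]$ takes the block form
\[
\begin{pmatrix} A(H)[L,L] & A(H)[L,R] \\ A(H)[L,R]^T & A(H)[R,R] \end{pmatrix}.
\]
My goal would be to show $A(H)[L,L]=0$ and that the square matrix $A(H)[L,R]$ is nonsingular. A short kernel argument then gives nonsingularity of the whole matrix, regardless of the shape of $A(H)[R,R]$: if $(x,y)^T$ lies in the kernel, then $A(H)[L,R]\,y=0$ forces $y=0$, and then $A(H)[L,R]^T x=0$ forces $x=0$.

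To see $A(H)[L,L]=0$: any two vertices of $L$, say $(b,e,v)$ and $(b',e',v')$, can be adjacent in $H$ only if they share some $S_u$, forcing $v=v'$. Since $L_e$ sits at the head of $e$ and every internal vertex of $T$ has a unique in-edge (edges are oriented away from the leaf $x$), we then also have $e=e'$. A case check against the three rules defining $E(H)$ shows that none of them creates an edge with both endpoints inside a single $L_e$: Rule~1 pairs $L_e$ only with $R_e$, Rule~2 pairs $L_e$ with some $R_f$ (in-edge/out-edge at $v$), and Rule~3 produces only $R$-$R$ edges since it operates on the tails of two out-edges at a common vertex.

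For the invertibility of $A(H)[L,R]$: order $E_I(T)=(e_1,\ldots,e_n)$ by any topological order of $T$ rooted at $x$ (parents before children) and index $L_{e_i}$ and $R_{e_i}$ by $U_{e_i}$. The diagonal block $A(H)[L_{e_i},R_{e_i}]$ is the identity matrix, coming from the matching edges $\{(a,e_i,v),(a,e_i,w)\}$ in $\overline{e_i}$. For $i\neq j$, a nonzero entry in $A(H)[L_{e_i},R_{e_j}]$ would force $L_{e_i}$ (at the head of $e_i$) and $R_{e_j}$ (at the tail of $e_j$) to share some $S_v$, i.e., the head of $e_i$ equals the tail of $e_j$; this means $e_i$ is the parent of $e_j$ in $T$ and hence $i<j$. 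Thus $A(H)[L,R]$ is upper block triangular with identity diagonal, so $\det A(H)[L,R]=1$.

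The main subtlety is the edge-rule bookkeeping: correctly recognizing that Rule~3 creates only $R$-$R$ adjacencies (so $A(H)[L,L]$ vanishes) and correctly locating the Rule~2 off-diagonal blocks of $A(H)[L,R]$ within the ancestor-descendant structure of $T$. Once these are pinned down, the triangular shape of $A(H)[L,R]$ is forced by the orientation of $T$ away from $x$, and the argument concludes via the standard block-kernel trick; the potentially nonzero block $A(H)[R,R]$, populated by Rule~3 sibling edges, does not interfere.
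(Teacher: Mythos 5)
Your proof is correct, but it takes a genuinely different route from the paper's. The paper proves the stronger statement that $A(H)[\overline{W}]$ is nonsingular for \emph{every} $W\subseteq E_I(T)$, by induction on $\abs{W}$: it picks an edge $f\in W$ incident with a leaf of $T[W]$, notes that every edge of $H[\overline{f}]$ is then incident with a degree-one vertex of $H[\overline{W}]$, and peels $\overline{f}$ off using Lemma~\ref{lem:deg1} together with Tucker's theorem (Theorem~\ref{thm:21}); the same peeling machinery is then reused almost verbatim in Lemma~\ref{lem:ignore2}. You instead exhibit the matrix explicitly: with respect to the partition $\overline{E_I(T)}=L\cup R$ it has the form $\left(\begin{smallmatrix}0 & B\\ B^{t} & D\end{smallmatrix}\right)$ with $B=A(H)[L,R]$ square, and your case analysis of the three edge rules is accurate (Rule~1 matches $L_e$ to $R_e$, Rule~2 joins $L_e$ to $R_f$ only when $e$ is the in-edge and $f$ an out-edge at a common vertex of $T$, Rule~3 produces only $R$--$R$ edges), so $A(H)[L,L]=0$ and the off-diagonal blocks of $B$ lie strictly above the diagonal in any topological order, since a nonzero block $A(H)[L_{e_i},R_{e_j}]$ forces $e_i$ to be the parent edge of $e_j$. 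With the identity diagonal blocks from Rule~1, $B$ is unimodular, and your kernel argument correctly shows that $D$ is irrelevant. Your approach is more elementary and self-contained, and even yields $\det A(H)[\overline{E_I(T)}]=\pm\det(B)^2=1$; what it does not provide is the reusable induction that the paper later needs to reduce $A(H)[\overline{E_I(T)}\cup\{\overline{a},\overline{b}\}]$ to $A(H)[\overline{E(P)}]$ in Lemma~\ref{lem:ignore2}, so adopting your proof would still leave that separate argument to be made.
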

\begin{proof}
We claim that for all $W\subseteq E_I(T)$, $A(H)[\overline{W}]$ is nonsingular. We proceed by induction on $\abs{W}$. If $W$ is empty, then it is trivial. If $\abs{W}\geq 1$, then $W$ induces a forest in $T$, and therefore there must be an edge $f\in W$ which has a leaf in $T[W]$. By induction hypothesis, $A(H)[\overline{W\setminus \{f\}}]$ is nonsingular. Since every edge in $H[\overline{f}]$ is incident with a leaf in $H[\overline{W}]$, by Lemma~\ref{lem:deg1}, pivoting all edges in $\overline{f}$ does not change the graph $H[\overline{W\setminus \{f\}}]$. So, $A(H[\overline{W}]\pivot \overline{f})[\overline{W\setminus \{f\}}]=A(H)[\overline{W\setminus \{f\}}]$ and therefore, by Theorem~\ref{thm:21}, $A(H)[\overline{f}\Delta \overline{W\setminus \{f\}}]=A(H)[\overline{W}]$ is nonsingular.
\end{proof}

By Lemma~\ref{lem:ignore1}, we can pivot $H$ by
$\overline{E_I(T)}$. Now in order to determine the adjacency in the
graph $H\pivot \overline{E_I(T)}$, we need to determine whether the
matrix $A(H)[\overline{E_I(T)}\cup \{\overline{a}, \overline{b}\}]$ is
nonsingular where $a$, $b\in V(G)$. 
In the following lemma, we will show that to determine the adjacency in the graph $H\pivot \overline{E_I(T)}$, it is enough to pivot a small set of vertices.

\begin{LEM}\label{lem:ignore2}
Let $a$, $b\in V(G)$ and let $P$ be a path from $L(a)$ to $L(b)$ in $T$. Then $A(H)[\overline{E_I(T)}\cup \{\overline{a}, \overline{b}\}]$ is nonsingular if and only if $A(H)[\overline{E(P)}]$ is nonsingular.
\end{LEM}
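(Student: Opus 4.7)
I would plan to prove the strengthened statement that for every $W_0 \subseteq W := E_I(T) \setminus E(P)$, the matrix $A(H)[\overline{E(P)} \cup \overline{W_0}]$ is nonsingular if and only if $A(H)[\overline{E(P)}]$ is nonsingular. Taking $W_0 = W$ recovers the lemma, since the two leaf edges of $P$ contribute $\overline{a}$ and $\overline{b}$, giving $\overline{E(P)} \cup \overline{W} = \{\overline{a}, \overline{b}\} \cup \overline{E_I(T)}$. I would proceed by induction on $\abs{W_0}$, with $W_0 = \emptyset$ as the trivial base case.

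For the inductive step, the first task is to locate a pendant edge $g \in W_0$ whose peeling is harmless. Consider the subforest of $T$ with edge set $W_0$. Because $W_0 \cap E(P) = \emptyset$ and $T$ is a tree, no component of this subforest can have two leaves on $P$: the unique $T$-path between two such leaves would consist of $W_0$-edges and simultaneously be a subpath of $P$, a contradiction. Therefore every component with at least one edge has a leaf $u$ off $P$, and such a $u$ is an inner vertex of $T$ whose unique $W_0$-edge is some $g$ and whose remaining two incident edges $\alpha, \beta$ lie neither in $W_0$ nor in $E(P)$.

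The key step will be to verify that in the induced subgraph $H[V_0']$ where $V_0' := \overline{E(P)} \cup \overline{W_0}$, each vertex of $\overline{g}_u := S_u \cap \overline{g}$ has degree exactly one, with its only neighbor being its Type 1 match inside $\overline{g}_{u'} := S_{u'} \cap \overline{g}$. Inspecting the three edge-types of $H$ shows that any neighbor of $\overline{g}_u$ other than its match lies in $\overline{\alpha}_u \cup \overline{\beta}_u \subseteq S_u$. These are absent from $V_0'$: if $\alpha$ is a leaf edge at $L(c)$, then $c \notin \{a, b\}$ (else $u$ would lie on $P$) and $\overline{c} \notin V_0'$; if $\alpha$ is an inner edge, then $\alpha \notin W_0 \cup E(P)$ and hence $\overline{\alpha} \cap V_0' = \emptyset$; and symmetrically for $\beta$. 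Given this degree-one property, I would peel off the matching pairs $(\ell_i, r_i)$ with $\ell_i \in \overline{g}_u$ one at a time. For each such pair, Lemma~\ref{lem:deg1} yields $(A(H)[V'] \ast \{\ell_i, r_i\})[V' \setminus \{\ell_i, r_i\}] = A(H)[V' \setminus \{\ell_i, r_i\}]$, which combined with the Schur-complement identity $\det A(H)[V'] = \det A(H)[\{\ell_i, r_i\}] \cdot \det (A(H)[V'] \ast \{\ell_i, r_i\})[V' \setminus \{\ell_i, r_i\}]$ shows $A(H)[V']$ is nonsingular if and only if $A(H)[V' \setminus \{\ell_i, r_i\}]$ is nonsingular. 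Iterating removes all of $\overline{g}$, reducing $V_0'$ to $\overline{E(P)} \cup \overline{W_0 \setminus \{g\}}$, and the inductive hypothesis finishes the argument. The main obstacle will be the edge-type bookkeeping that establishes the degree-one property; everything else is a clean descent.
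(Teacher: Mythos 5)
Your proof is correct and follows essentially the same route as the paper's: both strengthen the lemma to an inductive claim that peels off, one at a time, a pendant inner edge of $T$ lying outside $P$, and both use Lemma~\ref{lem:deg1} together with Theorem~\ref{thm:21} (your Schur-complement identity is an equivalent substitute) to show that deleting the matched pairs of that edge does not affect nonsingularity. The only cosmetic differences are that you remove the pairs of $\overline{g}$ one by one rather than pivoting all of $\overline{f}$ at once, and that you locate the pendant edge via the components of the forest $T[W_0]$ rather than via the leaves of $T[W\cup E(P)]$.
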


\begin{proof}
We claim that for $E(P)\cap E_I(T)\subseteq W\subseteq E_I(T)$, $A(H)[\overline{W}\cup \{\overline{a}, \overline{b}\}]$ is nonsingular if and only if $A(H)[\overline{E(P)}]$ is nonsingular.

We use induction on $\abs{W}$. If $W=E(P)\cap E_I(T)$, then it is trivial, because $\overline{W}\cup \{\overline{a},\overline{b}\}=\overline{E(P)}$. So we may assume that $\abs{W}>\abs{E(P)\cap E_I(T)}$. Since $P$ is a maximal path in $T$, the subgraph of $T$ having the edge set $W\cup E(P)$ must have at least $3$ leaves. Thus there is an edge $f$ in $W\setminus E(P)$ incident with a leaf in $T[W\cup E(P)]$ other than $L(a)$ and $L(b)$. Since every edge in $\overline{f}$ is incident with a leaf in $H[\overline{W}]$, by Lemma~\ref{lem:deg1}, $A(H[\overline{W}\cup \{\overline{a}, \overline{b}\}]\pivot \overline{f})[\overline{W\setminus \{f\}}\cup \{\overline{a}, \overline{b}\}]=A(H)[\overline{W\setminus \{f\}}\cup \{\overline{a}, \overline{b}\}]$. By induction hypothesis and Theorem~\ref{thm:21}, we deduce that 
\begin{align*}
A(H)[\overline{E(P)}]\text{ is nonsingular}
&\Leftrightarrow A(H)[\overline{W\setminus \{f\}}\cup \{\overline{a}, \overline{b}\}]\text{ is nonsingular}\\
&\Leftrightarrow A(H[\overline{W}\cup \{\overline{a}, \overline{b}\}]\pivot \overline{f})[\overline{W\setminus \{f\}}\cup \{\overline{a}, \overline{b}\}]\text{ is nonsingular}\\
&\Leftrightarrow A(H)[\overline{W}\cup \{\overline{a}, \overline{b}\}]\text{ is nonsingular.}\qedhere
\end{align*}\end{proof}

From now on, we focus on how to determine the adjacency in $H\pivot \overline{E_I(T)}$ by computing $\det\left(A(H)[\overline{E(P)}]\right)$.

\begin{LEM}\label{lem:path}
Let $P=(e_{n+1}, e_n, \ldots, e_1)$ be the directed path from $w$ to $v$ in $T$. Then $C_{e_1}C_{e_2} \ldots C_{e_n}A(G)[U_{e_{n+1}}, B_{e_{n+1}}]=A(G)[U_{e_1},B_{e_{n+1}}]$.
\end{LEM}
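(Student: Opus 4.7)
\medskip
\noindent\textbf{Proof proposal.}
The plan is to prove the lemma by induction on $n$, the length of the suffix of the path after $e_{n+1}$. The key structural fact I will use repeatedly is that since the path is directed away from $x$, the sets $A_{e_i}$ are nested: for $i<j$ we have $A_{e_i}\subseteq A_{e_j}$, and hence the complements satisfy $B_{e_j}\subseteq B_{e_i}$. In particular $U_{e_i}\subseteq A_{e_j}$ for all $i\le j$, which makes it legal to take the $U_{e_i}$ rows of any matrix that lives on $A_{e_j}$.

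The base case $n=0$ is trivial (empty product). For $n=1$ we must show $C_{e_1}\,A(G)[U_{e_2},B_{e_2}]=A(G)[U_{e_1},B_{e_2}]$. By definition $C_{e_1}=P_{e_2}[U_{e_1},U_{e_2}]$ and $P_{e_2}\,A(G)[U_{e_2},B_{e_2}]=A(G)[A_{e_2},B_{e_2}]$. Since $U_{e_1}\subseteq A_{e_1}\subseteq A_{e_2}$, we can restrict this identity to the rows indexed by $U_{e_1}$ to obtain exactly the desired equality.

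For the inductive step, suppose the lemma holds for paths of length $n$. Apply the same row-restriction trick one step at the top: since $U_{e_n}\subseteq A_{e_n}\subseteq A_{e_{n+1}}$, restricting $P_{e_{n+1}}\,A(G)[U_{e_{n+1}},B_{e_{n+1}}]=A(G)[A_{e_{n+1}},B_{e_{n+1}}]$ to the rows $U_{e_n}$ gives
\[
C_{e_n}\,A(G)[U_{e_{n+1}},B_{e_{n+1}}]=A(G)[U_{e_n},B_{e_{n+1}}].
\]
Now the induction hypothesis applied to the shorter directed path $(e_n,e_{n-1},\dots,e_1)$ yields
\[
C_{e_1}C_{e_2}\cdots C_{e_{n-1}}\,A(G)[U_{e_n},B_{e_n}]=A(G)[U_{e_1},B_{e_n}].
\]
Because $B_{e_{n+1}}\subseteq B_{e_n}$, restricting the columns of this identity to $B_{e_{n+1}}$ gives
\[
C_{e_1}C_{e_2}\cdots C_{e_{n-1}}\,A(G)[U_{e_n},B_{e_{n+1}}]=A(G)[U_{e_1},B_{e_{n+1}}].
\]
Substituting the previous display into the left-hand side of this equation produces the desired identity $C_{e_1}\cdots C_{e_n}\,A(G)[U_{e_{n+1}},B_{e_{n+1}}]=A(G)[U_{e_1},B_{e_{n+1}}]$.

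There is no real obstacle here; the whole argument is a careful unwinding of the definitions of $P_e$ and $C_f$. The only point to be vigilant about is keeping track of the nesting $A_{e_1}\subseteq A_{e_2}\subseteq \cdots \subseteq A_{e_{n+1}}$ induced by the orientation away from $x$, which legitimizes both the row restriction to $U_{e_n}$ at the top and the column restriction to $B_{e_{n+1}}$ at the bottom.
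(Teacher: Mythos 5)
Your proof is correct and follows essentially the same route as the paper's: induction on $n$, using the defining identity $P_eA(G)[U_e,B_e]=A(G)[A_e,B_e]$ together with row restriction to a $U_f$ and column restriction along the nesting $B_{e_{n+1}}\subseteq B_{e_i}$. The only (immaterial) difference is that you peel off the factor $C_{e_n}$ adjacent to $A(G)[U_{e_{n+1}},B_{e_{n+1}}]$ and apply the induction hypothesis to $(e_n,\ldots,e_1)$, whereas the paper peels off $C_{e_1}$ and applies it to $(e_{n+1},\ldots,e_2)$.
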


\begin{proof}
We proceed by induction on $n$. If $n=1$, then by definition, 
\[C_{e_1}A(G)[U_{e_2},B_{e_2}]=P_{e_2}[U_{e_1},U_{e_2}]A(G)[U_{e_2},B_{e_2}]=A(G)[U_{e_1},B_{e_2}].\]
We may assume that $n\geq 2$. By induction hypothesis, 
\[C_{e_2}C_{e_3} \ldots C_{e_n}A(G)[U_{e_{n+1}},B_{e_{n+1}}]=A(G)[U_{e_2},B_{e_{n+1}}].\]
Since $C_{e_1}A(G)[U_{e_2},B_{e_2}]=A(G)[U_{e_1},B_{e_2}]$ and $B_{e_{n+1}}\subseteq B_{e_2}$, 
\[C_{e_1}A(G)[U_{e_2},B_{e_{n+1}}]=A(G)[U_{e_1},B_{e_{n+1}}].\] Therefore, we conclude that 
\begin{align*}
C_{e_1}C_{e_2} \ldots C_{e_n}A(G)[U_{e_{n+1}}, B_{e_{n+1}}]
&=C_{e_1}A(G)[U_{e_2},B_{e_{n+1}}] \\
&=A(G)[U_{e_1},B_{e_{n+1}}].\qedhere
\end{align*}
\end{proof}

\begin{LEM}\label{lem:det}
\[
\det \, \begin{blockarray}{ccccccccc}
\begin{block}{(c|cccccccc)}
0&C_1&0&0&\cdots &0&0\\ \cline{1-9}
0&I&C_2&0&\cdots &0&0\\ 
0&0&I&C_3&      &0&0\\
0&0&0&I&      &0&0\\
\vdots & & & & \ddots  & & \vdots \\
0&0&0&0&  \cdots    &I&C_n \\  
C_{n+1}&0&0&0&  \cdots    &0&I \\  
\end{block}
\end{blockarray} =(-1)^{n}\det(C_1 C_2 \ldots C_{n+1}).
\]
$($Since we mainly focus on the binary field, $-1=+1.)$
\end{LEM}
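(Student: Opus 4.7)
The plan is to eliminate the $C_k$ blocks one at a time by block row operations, starting at the bottom of the matrix, and then to clear out column $0$ by block column operations. Concretely, for $k=n,n-1,\ldots,1$ in that order, I would replace row $k$ with row $k$ minus $C_k$ times (the current) row $k+1$. Such operations preserve the determinant. A short computation, which I would carry out once for $k=n$ and then fold into an easy induction on $k$, shows that at each step the $C_k$ at position $(k,k)$ is wiped out, the identity block at position $(k,k-1)$ is untouched (because row $k+1$ has a zero block in column $k-1$), and a new block $(-1)^{n+1-k} C_k C_{k+1}\cdots C_{n+1}$ appears in column $0$ of row $k$.

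After this cascade of row operations, the matrix has row $1$ equal to $((-1)^n C_1 C_2\cdots C_{n+1},0,\ldots,0)$, row $n+1$ unchanged as $(C_{n+1},0,\ldots,0,I)$, and for $2\leq k\leq n$, row $k$ of the form $(s_k,0,\ldots,0,I,0,\ldots,0)$ with $I$ in column $k-1$ and $s_k=(-1)^{n+1-k} C_k\cdots C_{n+1}$ in column $0$. Now for each $k\geq 2$, the block in column $k-1$ is an $I$ confined to row $k$, so subtracting $s_k$ times column $k-1$ from column $0$ kills the $s_k$ in row $k$ without disturbing any other entry of the matrix. These column operations also preserve the determinant.

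After both passes, the only nonzero block entries are $(-1)^n C_1 C_2\cdots C_{n+1}$ at position $(1,0)$ and the identity block at position $(k,k-1)$ for each $k=2,\ldots,n+1$. This is a block permutation matrix associated with the bijection $k\mapsto k-1$ between row and column indices, so its determinant equals $\det((-1)^n C_1 C_2\cdots C_{n+1})\cdot\prod_{k=2}^{n+1}\det(I)$, which over the binary field is simply $\det(C_1 C_2\cdots C_{n+1})$, as claimed. (Over a general field one picks up an overall sign from permuting blocks and from pulling $(-1)^n$ out of a determinant, but this is immaterial here since the paper uses the lemma only over $\mathbb{F}_2$.)

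There is no real obstacle: the only thing to verify carefully is the bookkeeping in the row-reduction cascade, namely that the row operation at step $k$ uses only zeros of the modified row $k+1$ in columns $1,\ldots,k-2$ and the identity in column $k$, so that the previously created identity block in column $k-1$ and the accumulated product in column $0$ are the only contributions. This is immediate from the structural description above, and the column-clearing pass at the end is similarly transparent because each clearing column has a single nonzero block, namely an $I$.
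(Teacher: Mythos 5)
Your proof is correct and is essentially the paper's argument: both compute the determinant by elementary block row operations that telescope the blocks into the single product $C_1C_2\cdots C_{n+1}$, with the sign discrepancy dismissed over $\mathbb{F}_2$ exactly as the paper does. The paper runs the cascade forward on the top row alone (pushing the accumulated product through columns $1,\dots,n$ and back to column $0$ via the last row, then reading off the result from block-triangularity), whereas you sweep bottom-up through all rows and finish with a column-clearing pass to reach a block permutation matrix; the difference is only in the order of the eliminations.
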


\begin{proof}
By elementary row operation,
\begin{align*}
\lefteqn{\det \,
\begin{blockarray}{cccccccccc}
\begin{block}{(c|ccccccccc)}
0&C_1&0&0&\cdots &0&0\\ \cline{1-9}
0&I&C_2&0&\cdots &0&0\\ 
0&0&I&C_3&      &0&0\\
0&0&0&I&      &0&0\\
\vdots & & & & \ddots  & & \vdots \\
0&0&0&0&  \cdots    &I&C_n \\  
C_{n+1}&0&0&0&  \cdots    &0&I \\  
\end{block}
\end{blockarray}}\displaybreak[0]\\ 
& = \det \,  
\begin{blockarray}{cccccccccc}
\begin{block}{(c|ccccccccc)}
0&0&-C_1C_2&0&\cdots &0&0\\ \cline{1-9}
0&I&C_2&0&\cdots &0&0\\ 
0&0&I&C_3&      &0&0\\
0&0&0&I&      &0&0\\
\vdots & & & & \ddots  & & \vdots \\
0&0&0&0&  \cdots    &I&C_n \\  
C_{n+1}&0&0&0&  \cdots    &0&I \\  
\end{block}
\end{blockarray}\displaybreak[0]\\ 
&= \det \,  
\begin{blockarray}{cccccccccc}
\begin{block}{(c|ccccccccc)}
0&0&0&(-1)^2C_1C_2C_3&\cdots &0&0\\ \cline{1-9}
0&I&C_2&0&\cdots &0&0\\ 
0&0&I&C_3&      &0&0\\
0&0&0&I&      &0&0\\
\vdots & & & & \ddots  & & \vdots \\
0&0&0&0&  \cdots    &I&C_n \\  
C_{n+1}&0&0&0&  \cdots    &0&I \\  
\end{block}
\end{blockarray}\displaybreak[0]\\ 
&= \det \,
\begin{blockarray}{cccccccccc}
\begin{block}{(c|ccccccccc)}
(-1)^nC_1C_2\ldots C_{n+1}&0&0&0&\cdots &0&0\\ \cline{1-9}
0&I&C_2&0&\cdots &0&0\\ 
0&0&I&C_3&      &0&0\\
0&0&0&I&      &0&0\\
\vdots & & & & \ddots  & & \vdots \\
0&0&0&0&  \cdots    &I&C_n \\  
C_{n+1}&0&0&0&  \cdots    &0&I \\  
\end{block}
\end{blockarray}\\
& = (-1)^n\det(C_1C_2\ldots C_{n+1}). \qedhere
\end{align*} 
\end{proof}

\begin{PROP}\label{prop:pivotminor}
Let $k\geq 1$. Let $G$ be a connected graph with rank-width $k$ and $\abs{V(G)}\geq 3$. Then a rank-expansion of $G$ has a pivot-minor isomorphic to $G$.
\end{PROP}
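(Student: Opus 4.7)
The plan is to show that $H\pivot\overline{E_I(T)}$ restricted to $\{\overline{a}:a\in V(G)\}$ is isomorphic to $G$ via $a\mapsto\overline{a}$. Lemma~\ref{lem:ignore1} makes this pivot well-defined. Fix $a,b\in V(G)$: since the pivoted adjacency matrix represents a simple graph, Theorem~\ref{thm:21} and Lemma~\ref{lem:ignore2} together reduce the adjacency question ``$\overline{a}\sim\overline{b}$ in $H\pivot\overline{E_I(T)}$'' to the nonsingularity of $A(H)[\overline{E(P)}]$, where $P$ is the $L(a)$-to-$L(b)$ path in $T$. It therefore suffices to prove that $A(H)[\overline{E(P)}]$ is nonsingular exactly when $ab\in E(G)$.

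Let $v$ be the vertex of $P$ closest to $x$, let $g_1,\dots,g_m$ be the edges of $P$ in order from $L(a)$ to $L(b)$, and let $g_t,g_{t+1}$ be the two edges at $v$. Ordering the blocks of $\overline{E(P)}$ along $P$ as
\[
\overline{a}=R_{g_1},\ L_{g_2},R_{g_2},\ \dots,\ L_{g_t},R_{g_t},\ R_{g_{t+1}},L_{g_{t+1}},\ \dots,\ R_{g_{m-1}},L_{g_{m-1}},\ R_{g_m}=\overline{b}
\]
makes $A(H)[\overline{E(P)}]$ symmetric and block tridiagonal with zero diagonal blocks; its off-diagonal blocks read $C_{g_1},I,C_{g_2},I,\dots,C_{g_{t-1}},I,D,I,C_{g_{t+2}},I,\dots,I,C_{g_m}$, where $D=A(G)[U_{g_t},U_{g_{t+1}}]$ records the adjacency at $v$ of its two outgoing edges, each $I$ is an internal $L_{g_i}$--$R_{g_i}$ matching, and the $C_{g_i}$'s arise at the other inner vertices on $P$. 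Since every $A(H)[\overline{g_i}]$ is nonsingular, I would pivot at $\overline{g_2},\dots,\overline{g_{m-1}}$ in turn (an iterated Schur complement, justified by Theorem~\ref{thm:22}): each pivot fuses the two $C$-blocks flanking the removed $\overline{g_i}$ into their product, and the computation of the resulting scalar fits exactly the cyclic-chain pattern of Lemma~\ref{lem:det}, showing that over $\mathrm{GF}(2)$
\[
\det A(H)[\overline{E(P)}] \;=\; C_{g_1}C_{g_2}\cdots C_{g_{t-1}}\,D\,C_{g_{t+2}}^{T}\cdots C_{g_{m-1}}^{T}C_{g_m}^{T},
\]
a scalar because $|U_{g_1}|=|U_{g_m}|=1$.

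To identify this scalar with $A(G)_{a,b}$, apply Lemma~\ref{lem:path} to the directed sub-path from $v$ to $L(a)$, yielding $C_{g_1}\cdots C_{g_{t-1}}A(G)[U_{g_t},B_{g_t}]=A(G)[\{a\},B_{g_t}]$, and to the directed sub-path from $v$ to $L(b)$, yielding (after transposition using the symmetry of $A(G)$) $A(G)[U_{g_t},\{b\}]=D\,C_{g_{t+2}}^{T}\cdots C_{g_m}^{T}$; substituting the second into the first at the column $b\in B_{g_t}$ recovers the above scalar as $A(G)_{a,b}$. The main obstacle will be the middle step: finding a block reordering that bridges the symmetric block-tridiagonal form of $A(H)[\overline{E(P)}]$ with the asymmetric cyclic template of Lemma~\ref{lem:det}, correctly tracking the direction reversal at $v$ so that $b$-side $C$-matrices appear transposed in the product, and disposing of the $\pm 1$ signs and the $\alpha$-vs-$\alpha^{2}$ ambiguity arising from iterated Schur complements, all of which collapse harmlessly over $\mathrm{GF}(2)$.
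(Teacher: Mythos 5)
Your reduction (Lemma~\ref{lem:ignore1}, Theorem~\ref{thm:21}, Lemma~\ref{lem:ignore2}) and your endgame (two applications of Lemma~\ref{lem:path} along the two directed subpaths out of the turning vertex, with transposes on the $b$-side) coincide with the paper's proof; the only real difference is how you evaluate $\det A(H)[\overline{E(P)}]$. The paper does not hunt for a reordering into the template of Lemma~\ref{lem:det}: it observes that $H[\overline{E(P)}]$ is bipartite with parts $\{\overline a\}\cup\bigcup R_{e_i}\cup\bigcup L_{f_j}$ and $\{\overline b\}\cup\bigcup L_{e_i}\cup\bigcup R_{f_j}$, so after permuting rows and columns the matrix becomes $\left(\begin{smallmatrix}C&0\\0&C^{t}\end{smallmatrix}\right)$, whence $\det=\det(C)^{2}=\det(C)$ over $\mathrm{GF}(2)$, and $C$ is exactly the cyclic block matrix of Lemma~\ref{lem:det}. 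That bipartite permutation is precisely the ``block reordering'' you flag as your main obstacle. Your alternative of pivoting out the interior blocks $\overline{g_2},\dots,\overline{g_{m-1}}$ also works, and in fact bypasses Lemma~\ref{lem:det} altogether: each $A(H)[\overline{g_i}]=\left(\begin{smallmatrix}0&I\\I&0\end{smallmatrix}\right)$ is self-inverse, the Schur complement fuses the two flanking connection matrices into their product while creating no other new entries, and over $\mathrm{GF}(2)$ Theorem~\ref{thm:21} (nonsingular means determinant $1$) substitutes for the determinant multiplicativity of the Schur complement, which the paper never states. So you should commit to one route rather than asserting that the Schur-complement computation ``fits the cyclic-chain pattern of Lemma~\ref{lem:det}''---it replaces that lemma. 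Two small loose ends to tidy: treat the degenerate case $L(a)=x$ or $L(b)=x$, where the turning vertex is the leaf $x$ itself, $D$ disappears and the product is one-sided (the paper's first case); and verify from the orientation of $T$ away from $x$ that the off-diagonal blocks really are $C_{g_i}$ on the $a$-side and $C_{g_j}^{t}$ on the $b$-side, which is where the asymmetry in the definition of the rank-expansion's edges enters.
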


\begin{proof}
Let $(T,L)$ be a rank-decomposition of a graph $G$ and let $x$ be a leaf in $T$. We orient each edge $f$ away from $x$. For each $f\in E(T)$, if $m$ is the width of $f$, we choose a basis $U_f=\{ u^f_1, u^f_2, \ldots, u^f_m\}\subseteq A_f$ of rows in the matrix $A(G)[A_f,B_f]$ such that $(U_e\cap A_f)\subseteq U_f$ if the head of an edge $e$ is the tail of $f$. Since $G$ is connected, $\abs{U_f}\geq 1$. Let $H$ be a rank-expansion $\boldsymbol{R}(G,T,L,x,\{U_f\}_{f\in E(T)})$ of a graph $G$. By Lemma~\ref{lem:ignore1}, $A(H)[\overline{E_I(T)}]$ is nonsingular. We will prove that for $a$, $b\in V(G)$, $\overline{a}\overline{b}\in E(H\pivot \overline{E_I(T)})$ if and only if $ab\in E(G)$.

Let $a$, $b$ be distinct vertices in $G$. We consider the path $P$ from $L(a)$ to $L(b)$ in $T$. By Theorem~\ref{thm:21} and Lemma~\ref{lem:ignore2}, 
\begin{multline*}
\left(A(H\pivot \overline{E_I(T)})\right)_{\overline{a},\overline{b}} 
=\det\left(A(H\pivot \overline{E_I(T)})[\{\overline{a},\overline{b}\}]\right) \\
=\det\left(A(H)[\overline{E_I(T)}\Delta \{ \overline{a},\overline{b}\}]\right)
=\det\left(A(H)[\overline{E(P)}]\right).
\end{multline*}
Thus, it is enough to show that $\det(A(H[\overline{E(P)}]))=\left(A(G)\right)_{a,b}$.

If $L(b)=x$, then $P=(e_{n+1}, e_n, \ldots ,e_1, e_0)$ is a directed path from $L(b)$ to  $L(a)$. The submatrix of $A(H)$ induced by $\overline{E(P)}$ is 
\begin{multline*} 
\begin{blockarray}{ccccccccccccc}
&\overline{b}&L_{e_1}&L_{e_2}&\cdots&L_{e_{n-1}}&L_{e_n}&\overline{a}&R_{e_1}&R_{e_2}&\cdots&R_{e_{n-1}}&R_{e_n}\\ 
\begin{block}{c(c|ccccc|c|ccccc)}
\overline{a}  &0&C_{e_0}&0&\cdots &0&0&0&0&0&  \cdots    &0&0\\ \cline{2-13}
R_{e_1}&0&I&C_{e_1}&\cdots &0&0&0&0&0&  \cdots    &0&0\\ 
R_{e_2}&0&0&I&      &0&0&0&0&0&  \cdots    &0&0\\
\vdots&\vdots & & & \ddots  & & \vdots &0& & & \ddots  & & \vdots \\
R_{e_{n-1}}&0&0&0&  \cdots    &I&C_{e_{n-1}}&0&0&0&  \cdots    &0&0 \\  
R_{e_n}&C_{e_{n}}&0&0&  \cdots    &0&I&0&0&0&  \cdots    &0&0 \\   \cline{2-13}
\overline{b}&0&0&0&  \cdots    &0&0&0&0&0&  \cdots    &0&C^t_{e_{n}} \\   \cline{2-13}
L_{e_1}&0&0&0&  \cdots    &0&0&C^t_{e_0}&I&0&\cdots &0&0\\ 
L_{e_2}&0&0&0&  \cdots    &0&0&0&C^t_{e_1}&I&      &0&0\\
\vdots&\vdots & & & \ddots  & & \vdots &0& & & \ddots  & & \vdots\\
L_{e_{n-1}}&0&0&0&  \cdots    &0&0&0&0&0&  \cdots    &I&0 \\  
L_{e_n}&0&0&0&  \cdots    &0&0&0&0&0&  \cdots    &C^t_{e_{n-1}}&I \\  
\end{block}
\end{blockarray} \\
=\left( \begin{array}{c|c} C & 0  \\ \hline
                           0 & C^t  \end{array} \right).
\end{multline*} 

Note that $\det(A(H)[\overline{E(P)}])=\det(C)\det(C^t)=\det(C)^2$. By Lemma~\ref{lem:det}, 
\[\det(C)=(-1)^n\det(C_{e_0}C_{e_1}\ldots C_{e_n}).\] 
Since $\abs{U_{e_{n+1}}}=\abs{B_{e_{n+1}}}=1$ and $\rank(A(G)[U_e,B_e])=\abs{U_e}$ for all edges $e\in E(T)$, $A(G)[U_{e_{n+1}},B_{e_{n+1}}]=(1)$. By Lemma~\ref{lem:path}, 
\begin{align*}
C_{e_0}C_{e_1} \ldots C_{e_n}&=C_{e_0}C_{e_1} \ldots C_{e_n}A(G)[U_{e_{n+1}}, B_{e_{n+1}}]\\
&=A(G)[U_{e_0},B_{e_{n+1}}]\\
&=\left(A(G)\right)_{a,b}.
\end{align*}
Therefore $\det(A(H)[\overline{E(P)}])=\left(A(G)\right)_{a,b}$, as required.

Now we assume that $L(a)\neq x$ and $L(b)\neq x$. Then there exists a vertex $y$ in $V(P)$ such that it has a shortest distance to $x$. Let $P_1=(e_n, e_{n-1}, \ldots, e_0)$ be the edges of $P$ from $y$ to $L(a)$ and $P_2=(f_{m}, f_{m-1}, \ldots, f_0)$ be the edges of $P$ from $y$ to $L(b)$.

Let $M=A(H)[R_{e_n},R_{f_m}]$. By the construction of a rank-expansion, $M=A(G)[U_{e_n},U_{f_m}]$. The submatrix of $A(H)$ induced by $\overline{E(P)}$ is 
\[
\begin{blockarray}{ccccccc}
&\{\overline{b}\}\cup \bigcup^n_{i=1} L_{e_i}\cup \bigcup^m_{i=1} R_{f_i}&\{\overline{a}\}\cup \bigcup^n_{i=1} R_{e_i}\cup \bigcup^m_{i=1} L_{f_i}\\ 
\begin{block}{c(c|ccc|cc)}
\{\overline{a}\}\cup \bigcup^n_{i=1} R_{e_i}\cup \bigcup^m_{i=1} L_{f_i}&C&0\\  \cline{2-3}
\{\overline{b}\}\cup \bigcup^n_{i=1} L_{e_i}\cup \bigcup^m_{i=1} R_{f_i}&0&C^t\\ 
\end{block}
\end{blockarray} 
\]       
where $C$ is 
\[
\begin{blockarray}{ccccccccccccc}
&\overline{b}&L_{e_1}&L_{e_2}&\cdots&L_{e_{n-1}}&L_{e_n}&R_{f_m}&R_{f_{m-1}}&\cdots&R_{f_2}&R_{f_1}\\ 
\begin{block}{c(c|ccccc|cccccc)}
\overline{a}  &0&C_{e_0}&0&\cdots &0&0&0&0&  \cdots    &0&0\\ \cline{2-12}
R_{e_1}&0&I&C_{e_1}&\cdots &0&0&0&0&  \cdots    &0&0\\ 
R_{e_2}&0&0&I&      &0&0&0&0&  \cdots    &0&0\\
\vdots&\vdots & & & \ddots  & & \vdots & & & \ddots  & & \vdots \\
R_{e_{n-1}}&0&0&0&  \cdots    &I&C_{e_{n-1}}&0&0&  \cdots    &0&0 \\  
R_{e_n}&0&0&0&  \cdots    &0&I&M&0&  \cdots    &0&0 \\   \cline{2-12}
L_{f_{m}}&0&0&0&  \cdots    &0&0&I&C^t_{f_{m-1}}&\cdots &0&0\\ 
L_{f_{m-1}}&0&0&0&  \cdots    &0&0&0&I&      &0&0\\
\vdots&\vdots & & & \ddots  & & \vdots & & & \ddots  & & \vdots\\
L_{f_2}&0&0&0&  \cdots    &0&0&0&0&  \cdots    &I&C^t_{f_1} \\  
L_{f_1}&C^t_{f_0}&0&0&  \cdots    &0&0&0&0&  \cdots    &0&I \\
\end{block}
\end{blockarray}\, .
\]

It is enough to show that $C_{e_0}C_{e_1}\ldots C_{e_{n-1}}MC^t_{f_{m-1}}C^t_{f_{m-2}}\ldots C^t_{f_0}=A(G)(a,b)$. Since $M=A(G)[U_{e_n},U_{f_m}]\subseteq A(G)[U_{e_n},B_{e_n}]$, by Lemma~\ref{lem:path}, we have
\begin{align*}
\lefteqn{C_{e_0}C_{e_1}\ldots C_{e_{n-1}}MC^t_{f_{m-1}}C^t_{f_{m-2}}\ldots C^t_{f_0}}\\
&=C_{e_0}C_{e_1}\ldots C_{e_{n-1}}A(G)[U_{e_n},U_{f_m}]C^t_{f_{m-1}}C^t_{f_{m-2}}\ldots C^t_{f_0}\\
&=A(G)[U_{e_0},U_{f_m}]C^t_{f_{m-1}}C^t_{f_{m-2}}\ldots C^t_{f_0}\\
&=(C_{f_0}C_{f_1}\ldots C_{f_{m-1}}A(G)[U_{f_m},U_{e_0}])^t\\
&=A(G)[U_{f_0},U_{e_0}]^t=\left(A(G)\right)_{a,b}.
\end{align*}

So, $\det(A(H)[\overline{E(P)}])=\left(A(G)\right)_{a,b}$, as claimed. Therefore, $\overline{a}\overline{b}\in E(H\pivot \overline{E_I(T)})$ if and only if $ab\in E(G)$. We conclude that a rank-expansion of $G$ has a pivot-minor isomorphic to $G$.
\end{proof}

\subsection{A rank-expansion has small tree-width.}
In the next proposition, we show that a rank-expansion has tree-width at most $2k$ when $\rw(G)\leq k$.

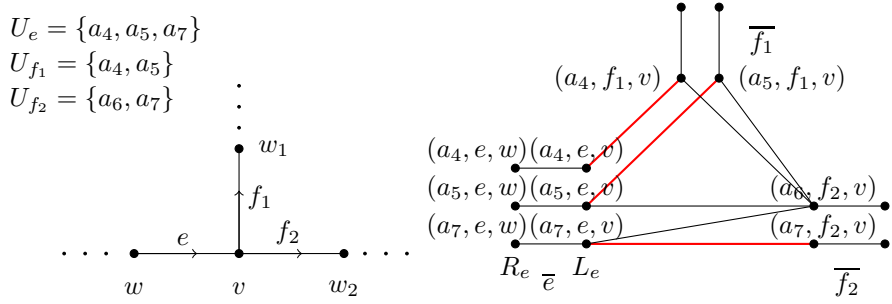
\begin{figure}[t]
\tikzstyle{v}=[circle, draw, solid, fill=black, inner sep=0pt, minimum width=3pt]
\tikzstyle{w}=[circle, draw, solid, fill=black, inner sep=0pt, minimum width=1pt]
\begin{tikzpicture}[scale=0.093]

\foreach \x in {0,3,6}
	\node [w] at (\x,15) {};
\foreach \x in {33,36,39}
	\node [w] at (25,\x) {};
\foreach \x in {43,46,49}
	\node [w] at (\x,15) {};

\draw [->] (10,15) -- (19,15);
\draw [->] (25,15) -- (34,15);
\draw [->] (25,15) -- (25,24);

\draw (10,15) -- (25,15);
\draw (25,15) -- (40,15);
\draw (25,15) -- (25,30);

\node [v] at (10,15) {};
\node [v] at (25,15) {};
\node [v] at (40,15) {};
\node [v] at (25,30) {};

\node at (17,17) {$e$};
\node at (28,23) {$f_1$};
\node at (32,18) {$f_2$};
\node at (25,10) {$v$};
\node at (10,10) {$w$};
\node at (30,30) {$w_1$};
\node at (40,10) {$w_2$};

\node at (6,47) {$U_e=\{a_4, a_5, a_7\}$};
\node at (4,42) {$U_{f_1}=\{a_4, a_5\}$};
\node at (4,37) {$U_{f_2}=\{a_6, a_7\}$};
\end{tikzpicture}
\begin{tikzpicture}[scale=0.063]

\draw [red][thick] (25,26) -- (45,45);
\draw [red][thick] (25,18) -- (53,45);
\draw [red][thick] (25,10) -- (73,10);
\draw (10,10) -- (25,10);
\draw (10,18) -- (25,18);
\draw (10,26) -- (25,26);
\draw (45,45) -- (45,60);
\draw (53,45) -- (53,60);
\draw (73,10) -- (88,10);
\draw (73,18) -- (88,18);

\node [v] at (10,10) {}; \node [v] at (10,18) {}; \node [v] at (10,26) {};
\node [v] at (25,10) {}; \node [v] at (25,18) {}; \node [v] at (25,26) {};
\node [v] at (45,45) {}; \node [v] at (53,45) {};
\node [v] at (45,60) {}; \node [v] at (53,60) {};
\node [v] at (73,10) {}; \node [v] at (73,18) {};
\node [v] at (88,10) {}; \node [v] at (88,18) {};

\node [above] at (2,9) {$(a_7,e,w)$}; 
\node [above] at (2,17) {$(a_5,e,w)$}; 
\node [above] at (2,25) {$(a_4,e,w)$};
\node [above] at (23,9) {$(a_7,e,v)$}; 
\node [above] at (23,17) {$(a_5,e,v)$}; 
\node [above] at (23,25) {$(a_4,e,v)$};
\node [left] at (43,45) {$(a_4,f_1,v)$}; 
\node [right] at (55,45) {$(a_5,f_1,v)$};
\node [above] at (75,9) {$(a_7,f_2,v)$}; 
\node [above] at (75,17) {$(a_6,f_2,v)$};

\node at (17,2) {$\overline{e}$};
\node at (80,2) {$\overline{f_2}$};
\node at (62,53) {$\overline{f_1}$};
\node at (10,5) {$R_e$};
\node at (25,5) {$L_e$};

\draw (45,45)-- (73,18);
\draw (53,45)-- (73,18);
\draw (25,18)-- (73,18);
\draw (73,18)-- (25,10);

\end{tikzpicture}
\caption{A rank-expansion of the graph $G$ in Figure~\ref{fig:graph}. By the construction of a rank-expansion, every vertex in $L_e$ has exactly one neighbor in $R_{f_1}\cup R_{f_2}\setminus \{(a_6,f_2,v)\}$ in the subgraph $H[S_v]$. }
\label{fig:expansion}\end{figure}

\begin{PROP}\label{prop:treewidth}
Let $k\geq 1$. Let $G$ be a connected graph with $\abs{V(G)}\geq 3$. If $G$ has rank-width $k$, Then $G$ has a rank-expansion of tree-width at most $2k$. Moreover, if $G$ has linear rank-width $k$, then $G$ has a rank-expansion of path-width at most $k+1$.
\end{PROP}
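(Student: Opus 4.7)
The plan is to build a tree-decomposition of $H$ using $T$ as a backbone.  For every internal edge $e\in E_I(T)$, I would place a ``bridge'' bag $B_e := \overline{e} = L_e\cup R_e$ of size at most $2k$ at a subdivision point of $e$; this bag covers all matching edges within $\overline{e}$.  For every internal vertex $v$ of $T$ with parent edge $e$ and child edges $f_1,f_2$, the set $S_v = L_e\cup R_{f_1}\cup R_{f_2}$ has size up to $3k$, so a single bag on $v$ would be too large; I plan to replace $v$ by a short local path of bags of size at most $2k+1$ and glue this path to the incident bridge bags.

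Concretely, at $v$ I would use bags of the form $L_e\cup R_{f_2}\cup\{r\}$ for $r\in R_{f_1}$, arranged as a path (a \emph{comb} decomposition of $H[S_v]$ with backbone $L_e\cup R_{f_2}$ and teeth $R_{f_1}$).  Each such bag has size at most $|L_e|+|R_{f_2}|+1\leq 2k+1$, and the three families of edges inside $H[S_v]$ produced by the construction in Section~\ref{sec:rankexpansion} (those coming from $C_{f_1}$, from $C_{f_2}$, and from the $G$-adjacency between $U_{f_1}$ and $U_{f_2}$) all lie inside some bag because every $r\in R_{f_1}$ appears together with the entire backbone $L_e\cup R_{f_2}$.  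The bridge bag $B_e$ is then attached to one end of this local path and $B_{f_2}$ to the opposite end; condition (T3) is automatic for vertices in $L_e$ and $R_{f_2}$ since these backbone sets sit in every local bag at $v$.

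The main obstacle is the matching of $\overline{f_1}$, which involves the combed side $R_{f_1}$ at $v$: each $r\in R_{f_1}$ appears in only one local bag, so attaching a single large bridge bag $B_{f_1}$ would break the subtree condition for $r$.  My plan is to assign the combed direction consistently along a root-to-leaf traversal of $T$ rooted at $x$, so that the combed side $R_{f_1}$ at $v$ is matched (through $\overline{f_1}$) to a backbone side $L_{f_1}$ at the child $w_1$; the matching pairs are then absorbed into small linking bags $\{r,r'\}$ attached as leaves to the corresponding local bag $L_e\cup R_{f_2}\cup\{r\}$ at $v$, and a careful orientation of these linking bags (always on the $v$-side of the midpoint, never connected back to $w_1$'s local path) avoids cycles while still letting $r'\in L_{f_1}$ be reached via the backbone at $w_1$.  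Verifying (T1)-(T3) is then a routine case analysis, and every bag has size at most $2k+1$, giving $\tw(H)\leq 2k$.

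For the linear rank-width statement, the tree $T$ is a caterpillar, so at every internal spine vertex $v_i$ one child is a leaf-edge $\ell_i$ with $|R_{\ell_i}|=1$; combing the leg direction always leaves a backbone of size at most $2k$, and stringing the local paths together along the caterpillar spine is itself a path-decomposition.  To sharpen this to $\pw(H)\leq k+1$, I would refine each local piece by a sliding-window argument that processes one matching pair of $\overline{e_{i+1}}$ at a time, keeping bags of the form $R_{e_{i+1}}\cup\{\overline{a_{v_i}},u\}$ with $u$ iterated over $L_{e_i}$ and the swap of matching partners at each $e_i$ handled by dropping one vertex of $R_{e_i}$ exactly when its partner in $L_{e_i}$ enters the window; each window has size at most $k+2$.
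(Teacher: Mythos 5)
Your overall strategy---subdivide $T$, use a short path of small bags at each inner vertex $v$ to cover $H[S_v]$, and separate bags for the matchings $\overline{e}$---is the same as the paper's, and you correctly identify the one real difficulty: the matching $\overline{f_1}$ whose $v$-side you have combed. But your fix for that difficulty does not work, and the problem is exactly where you locate it. Take $|U_{f_1}|\geq 2$ and a matched pair $r=(a,f_1,v)\in R_{f_1}$, $r'=(a,f_1,w_1)\in L_{f_1}$. In your scheme $r$ lives in a single local bag $\beta_r$ at $v$ (plus the linking bag $\{r,r'\}$ hanging off $\beta_r$), while $r'$ is a \emph{backbone} vertex at the child $w_1$ and hence lies in every local bag of $w_1$'s path. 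Condition (T3) for $r'$ then forces the linking bag to be joined to $w_1$'s local path by bags all containing $r'$; but the only route is back through $\beta_r$ and the rest of $v$'s local path, whose bags do not contain $r'$. If instead you attach each linking bag to both $\beta_r$ and $w_1$'s path, two distinct pairs $r_1,r_2$ create a cycle through $v$'s and $w_1$'s local paths, so the decomposition is no longer a tree. The only repairs within your scheme---carrying $r'$ (or $r$) through all intermediate bags---inflate the bags to size up to $|L_e|+|R_{f_2}|+|U_{f_1}|\leq 3k$, losing the claimed bound. So ``a careful orientation of these linking bags'' cannot be made to satisfy (T1)--(T3) as stated.

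The paper avoids this by combing in the opposite direction: at $v$ it keeps the two outgoing sides together in a single bag $B(v)=R_{f_1}\cup R_{f_2}$ of size at most $2k$, and combs over the \emph{incoming} side $L_e$. The point you are not using is the nesting condition $(U_e\cap A_{f_i})\subseteq U_{f_i}$, which guarantees that every vertex of $L_e$ has exactly one neighbour in $(R_{f_1}\cup R_{f_2})\setminus R^v$, where $R^v$ is the set of $(a,f_i,v)$ with $a\notin U_e$; thus the $L_e$--$(R_{f_1}\cup R_{f_2})$ edges split into a perfect matching plus arbitrary edges into $R^v$, and a sliding window that keeps $R^v\cup L_e$ and adds the matched partners one at a time has size $|R^v|+|L_e|+1\leq 2k+1$. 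Each matching $\overline{e}$ is then covered by its own sliding window of bags of size $|U_e|+1\leq k+1$ interpolating between $B(w)$ (which contains all of $R_e$) and the first bag at $v$ (which contains all of $L_e$); every vertex of $\overline{e}$ occupies a contiguous segment, so (T3) holds. Your path-width sketch inherits the same flaw since it is built on the same combed-outgoing-side construction; once the combing direction is reversed, the $k+1$ bound follows from the paper's observation that with $x$ an end of a longest path one of $|U_{f_1}|,|U_{f_2}|$ equals $1$ at every inner vertex.
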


\begin{proof}
Let $(T,L)$ be a rank-decomposition of $G$ of width $k$. We fix a leaf $x\in V(T)$ and orient each edge $f$ away from $x$. For each $f\in E(T)$, if $m$ is the width of $f$, we choose a basis $U_f=\{ u^f_1, u^f_2, \ldots, u^f_m\}\subseteq A_f$ of rows in the matrix $A(G)[A_f,B_f]$ such that $(U_e\cap A_f)\subseteq U_f$ if the head of an edge $e$ is the tail of $f$. Since $G$ is connected, $\abs{U_f}\geq 1$. Let $H$ be a rank-expansion $\boldsymbol{R}(G,T,L,x,\{U_f\}_{f\in E(T)})$ of a graph $G$.

Let $T'$ be a tree obtained from $T[V_I(T)]$ by replacing each edge from $w$ to $v$ with a path $wz^v_1z^v_2 \ldots z^v_{\abs{U_e}}p^v_1p^v_2 \ldots p^v_{\abs{U_e}}v$. Let $y$ be the neighbor of $x$ in $T$ and let $B(y)=S_y$. For $v\in V_I(T)\setminus \{y\}$, let $e=vw$ be the edge incoming to $v$ and $f_1$, $f_2$ be edges outgoing from $v$. Let $R^v=\{(a,f,v)\in R_{f_1}\cup R_{f_2}:a\notin U_e\}$. Since $(U_e\cap A_{f_i})\subseteq U_{f_i}$ for each $i\in\{1,2\}$, each vertex in $L_e$ has exactly one neighbor in $R_{f_1}\cup R_{f_2}\setminus R^v$. Let $B(v)=R_{f_1}\cup R_{f_2}$ and $B(z^v_1)=R_e\cup \{(u^e_1,e,v)\}$, $B(p^v_1)=R^v\cup L_e\cup \{(a,f,v)\in R_{f_1}\cup R_{f_2}:a=u^e_1\}$.
And for each $2\leq i\leq \abs{U_e}$, we define 
\begin{align*}
\lefteqn{B(z^v_{i})=B(z^v_{i-1})\setminus \{(u^e_{i-1},e,w)\} \cup\{(u^e_i,e,v)\}}\\
&B(p^v_i)=B(p^v_{i-1})\setminus \{(u^e_{i-1},e,v)\} \cup\{(a,f,v)\in R_{f_1}\cup R_{f_2}:a=u^e_i\}.
\end{align*}

 Now we show that the pair $(T',\{B(v)\}_{v\in V(T')})$ is a tree-decomposition of $H$. Note that for each $v\in V_I(T)\setminus \{y\}$ with the incoming edge $e$, $\bigcup_i E(H[B(z^v_i)])=E(H[\overline{e}])$ and $\bigcup_i E(H[B(p^v_i)])=E(H[S_v])$. Therefore all vertices and all edges in $H$ are covered by $B(v)$ for some $v\in V(T')$. So the first and second axioms of a tree-decomposition are satisfied.

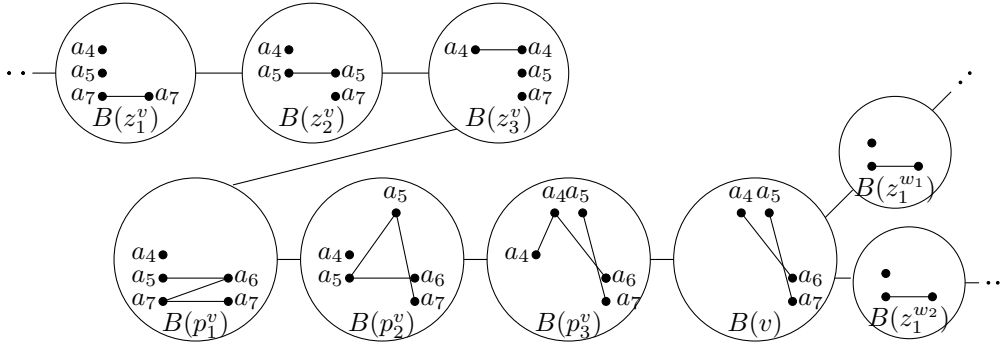
\begin{figure}[t]
\tikzstyle{v}=[circle, draw, solid, fill=black, inner sep=0pt,
minimum width=3pt]
\tikzstyle{w}=[circle, draw, solid, fill=black, inner sep=0pt,
minimum width=1pt]
\begin{tikzpicture}[scale=0.062]

\foreach \x in {20,60,100}
	\draw (\x,60) circle (15);
\foreach \x in {-5,-2}
	\node [w] at (\x,60) {};
\draw (0,60) -- (5,60);

\draw (35,60) -- (45,60);
\draw (75,60) -- (85,60);
\draw (91,48) -- (43,36);

\foreach \x in {52.5,92.5,132.5}
	\draw (\x,20) -- +(5,0);

\foreach \x in {55,60,65}
	\node [v] at (15,\x) {};
\draw (15,55) -- (25,55);
\node [v] at (25,55) {};

\node at (11,55) {$a_7$};
\node at (11,60) {$a_5$};
\node at (11,65) {$a_4$};
\node at (29,55) {$a_7$};
\node at (20,50) {$B(z^v_1)$};

\foreach \x in {52.5,92.5,132.5}
	\draw (\x,20) -- +(5,0);

\foreach \x in {60,65}
	\node [v] at (55,\x) {};
\foreach \x in {55,60}
	\node [v] at (65,\x) {};
\draw (55,60) -- (65,60);

\node at (51,60) {$a_5$};
\node at (51,65) {$a_4$};
\node at (69,55) {$a_7$};
\node at (69,60) {$a_5$};
\node at (60,50) {$B(z^v_2)$};


\foreach \x in {65}
	\node [v] at (95,\x) {};
\foreach \x in {55,60,65}
	\node [v] at (105,\x) {};
\draw (95,65) -- (105,65);

\node at (91,65) {$a_4$};
\node at (109,55) {$a_7$};
\node at (109,60) {$a_5$};
\node at (109,65) {$a_4$};
\node at (100,50) {$B(z^v_3)$};

\foreach \x in {35,75,115,155}
	\draw (\x,20) circle (17.5);

\foreach \x in {11,16,21}
	\node [v] at (28,\x) {};
\foreach \x in {11}
	\node [v] at (42,\x) {};
\draw (28,11) -- (42,11);

\node at (24,11) {$a_7$};
\node at (24,16) {$a_5$};
\node at (24,21) {$a_4$};
\node at (46,11) {$a_7$};
\node at (46,16) {$a_6$};
\node at (35,6) {$B(p^v_1)$};

\foreach \x in {42,82,123}
	\node [v] at (\x,16) {};
\foreach \x in {16,21}
	\node [v] at (68,\x) {};
	\node [v] at (82,11) {};
\draw (28,11) -- (42,16) -- (28,16);
\draw (68,16) -- (82,16);
\node [v] at (78,30) {};
\draw (68,16) -- (78,30) -- (82,11);

\node at (64,16) {$a_5$};
\node at (64,21) {$a_4$};
\node at (86,11) {$a_7$};
\node at (86,16) {$a_6$};
\node at (78,34) {$a_5$};
\node at (75,6) {$B(p^v_2)$};

\foreach \x in {21}
	\node [v] at (108,\x) {};
\foreach \x in {11,16}
	\node [v] at (123,\x) {};
\foreach \x in {112,118}
	\node [v] at (\x,30) {};
\draw (108,21) -- (112,30) -- (123,16);
\draw (118,30) -- (123,11);

\node at (104,21) {$a_4$};
\node at (128,11) {$a_7$};
\node at (127,16) {$a_6$};
\node at (117,34) {$a_5$};
\node at (112,34) {$a_4$};
\node at (115,6) {$B(p^v_3)$};

\foreach \x in {11,16}
	\node [v] at (163,\x) {};
\foreach \x in {152,158}
	\node [v] at (\x,30) {};
\draw (152,30) -- (163,16);
\draw (158,30) -- (163,11);

\node at (167,11) {$a_7$};
\node at (167,16) {$a_6$};
\node at (158,34) {$a_5$};
\node at (152,34) {$a_4$};
\node at (155,6) {$B(v)$};

\draw (185,43) circle (12);
\draw (188,15) circle (12);

\node [w] at (199,58) {};
\node [w] at (201,60) {};
\foreach \x in {205,207}
	\node [w] at (\x,15) {};
\draw (170,29) -- +(5.9,5.9);
\draw (172,16) -- (175.5,16);
\draw (193,52) -- (197,56);
\draw (200,15) -- (203,15);

\node at (185,35) {$B(z^{w_1}_1)$};
\node at (188,7) {$B(z^{w_2}_1)$};

\foreach \x in {-5,-2}
	\node [w] at (\x,60) {};

\draw (180,40) -- (190,40);
\draw (183,12) -- (193,12);

\foreach \x in {40,45}
	\node [v] at (180,\x) {};
\foreach \x in {40}
	\node [v] at (190,\x) {};
\foreach \x in {12,17}
	\node [v] at (183,\x) {};
\foreach \x in {12}
	\node [v] at (193,\x) {};

\end{tikzpicture}
\caption{Tree-decomposition of a rank-expansion in Figure~\ref{fig:expansion}. The vertex sets $B(z^v_i)$ and $B(p^v_i)$, defined in Proposition~\ref{prop:pivotminor}, are bags which decompose $H[\overline{e}]$ and $H[S_v]$, respectively. }\label{fig:td}
\end{figure}

For the third axiom, it suffices to show that for every $t\in V(H)$, $T'[\{z:B(z)\ni t\}]$ is a subtree of $T'$. Let $t=(u^e_j,e,v)\in V(H)$ for some $e=vw\in E(T)$ and $1\leq j\leq \abs{U_e}$. If $v$ is the head of $e$, $T'[\{z:B(z)\ni t\}]=T'[\{z^v_j, \ldots, z^v_{\abs{U_e}}, p^v_1, \ldots, p^v_j\}]$, and it forms a path. Suppose $v$ is the tail of $e$. Let $f$ be the edge incoming to $v$, and if $a\in U_f$, then let $h$ be the integer such that $a=u^f_h$, if otherwise, let $h=1$. Then $T'[\{z:B(z)\ni t\}]=T'[\{p^v_h, \ldots, p^v_{\abs{U_e}}, v, z^w_1, \ldots, z^w_j\}]$. It also forms a path, thus $(T',\{B(v)\}_{v\in V(T')})$ is a tree-decomposition of $H$.

Since $\abs{B(y)}\leq 2k+1$ and for each $v\in V_I(T)\setminus \{y\}$ with the incoming edge $e$, $\abs{B(z^v_i)}=\abs{B(z^v_1)}=\abs{R_e}+1\leq k+1$, $\abs{B(p^v_i)}=\abs{B(p^v_1)}=\abs{R^v}+\abs{L_e}+1\leq (2k-\abs{U_e})+\abs{U_e}+1=2k+1$ and $\abs{B(v)}\leq 2k$, the resulting tree-decomposition has width at most $2k$.

Suppose that $G$ has linear rank-width at most $k$. Here, we choose $x\in V(T)$ such that $x$ is an end of a longest path in $T$, and let $y$ be the neighbor of $x$. For $v\in V_I(T)$ with outgoing edges $f_1$ and $f_2$, $\abs{U_{f_1}}=1$ or $\abs{U_{f_2}}=1$ because every inner vertex of $T$ is incident with a leaf. Therefore, for each $v\in V_I(T)\setminus \{y\}$ and $1\leq i\leq \abs{U_e}$, $\abs{B(p^v_i)}\leq (k+1-\abs{U_e})+\abs{U_e}+1=k+2$ and $\abs{B(v)}\leq k+1$, and $\abs{B(y)}\leq k+2$. Moreover, since $T[V_I(T)]$ is a path, $T'$ is also a path. Therefore $(T',\{B(v)\}_{v\in V(T')})$ is a path-decomposition of $H$ with path-width at most $k+1$.
\end{proof}

\begin{proof}[Proof of Theorem \ref{thm:main1}]
If $k=0$, then it is trivial. We assume that $k\geq 1$. We proceed by induction on the number of vertices.

Suppose $G$ is connected. Since $G$ has rank-width at most $k$ and $\abs{V(G)}\geq 3$, by Proposition~\ref{prop:treewidth}, there is a rank-expansion $H$ of $G$ such that $\tw(H)\leq 2k$, and $\abs{V(H)}\leq (2k+1)\abs{V(G)}-6k$. By Proposition~\ref{prop:pivotminor}, $H$ has a pivot-minor isomorphic to $G$.

If $G$ is disconnected, then we choose a largest component $Y$ of $G$. Since $k\geq 1$, the component $Y$ has at least $2$ vertices. If $\abs{V(Y)}=2$, then $G$ has rank-width $1$ and tree-width $1$, and $\abs{V(G)}\leq (2+1)\abs{V(G)}-6$ since $\abs{V(G)}\geq 3$. We assume that $\abs{V(Y)}\geq 3$. Then by induction hypothesis, there is a graph $H_1$ such that $Y$ is isomorphic to a pivot-minor of $H_1$ and $\tw(H_1)\leq 2k$ and $\abs{V(H_1)}\leq (2k+1)\abs{V(Y)}-6k$.

If $G\setminus V(Y)$ has tree-width at most $1$, then $G$ is isomorphic to a pivot-minor of the disjoint union of two graphs $H_1$ and $G\setminus V(Y)$, and the tree-width of it is equal to the tree-width of $H_1$. Since $\abs{V(H_1)}+\abs{V(G)\setminus V(Y)}\leq (2k+1)\abs{V(Y)}-6k+\abs{V(G)\setminus V(Y)}\leq (2k+1)\abs{V(G)}-6k$, we obtain the result. If tree-width of $G\setminus V(Y)$ is at least $2$, then $\abs{V(G)\setminus V(Y)}\geq 3$. Therefore, by induction hypothesis, there is a graph $H_2$ such that $G\setminus V(Y)$ is isomorphic to a pivot-minor of $H_2$ and $\tw(H_2)\leq 2k$ and $\abs{V(H_2)}\leq (2k+1)\abs{V(G)\setminus V(Y)}-6k$. So $G$ is isomorphic to a pivot-minor of the disjoint union of two graphs $H_1$ and $H_2$, and the tree-width of it is at most $2k$, and $\abs{V(H_1)}+\abs{V(H_2)}\leq (2k+1)\abs{V(G)}-6k$. Thus, we conclude the theorem. 
\end{proof}

\begin{proof}[Proof of Theorem \ref{thm:main2}]
We can easily obtain the proof of Theorem~\ref{thm:main2} from the proof of Theorem~\ref{thm:main1}.
\end{proof}

\section{Graphs with rank-width or linear rank-width at most $1$} 

Distance-hereditary graphs are introduced by Bandelt and Mulder~\cite{BM1986}. A graph $G$ is \emph{distance-hereditary} if for every connected induced subgraph $H$ of $G$ and vertices $a$, $b$ in $H$, the distance between $a$ and $b$ in $H$ is the same as in $G$. Oum~\cite{Oum05} showed that distance-hereidtary graphs are exactly graphs of rank-width at most $1$. Recently, Ganian~\cite{Ganian10} obtain a similar characterization of graphs of linear rank-width $1$. In this section, we obtain another characterizations for these classes in terms of vertex-minor relation.

Note that every tree has rank-width at most $1$ and every path has linear rank-width at most $1$.

\begin{THM}\label{thm:rwd1}
Let $G$ be a graph. The following are equivalent:
\begin{enumerate}
\item $G$ has rank-width at most $1$.
\item $G$ is distance-hereditary.
\item $G$ has no vertex-minor isomorphic to $C_5$.
\item $G$ is a vertex-minor of a tree.
\end{enumerate}
\end{THM}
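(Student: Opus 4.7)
The plan is to prove each implication separately, using the rank-expansion machinery of Section~\ref{sec:rankexpansion}. The equivalences $(1) \Leftrightarrow (2)$ and $(1) \Leftrightarrow (3)$ are already known: $(1) \Leftrightarrow (2)$ is the characterization of rank-width-one graphs as distance-hereditary graphs due to Oum~\cite{Oum05}, and $(1) \Leftrightarrow (3)$ follows from the fact that $C_5$ is the unique excluded vertex-minor for rank-width at most $1$, also due to Oum. I would simply cite these.

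For $(4) \Rightarrow (1)$, I would use the observation, stated just before the theorem, that every tree has rank-width at most $1$, together with the monotonicity of rank-width under vertex-minors~\cite{Oum05}: every vertex-minor of a tree therefore has rank-width at most $1$.

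The new content is $(1) \Rightarrow (4)$. The plan is to apply Propositions~\ref{prop:pivotminor} and~\ref{prop:treewidth} with $k=1$. Reducing first to the connected case with $\abs{V(G)} \geq 3$ (the disconnected case is handled afterwards by joining the trees produced for each component via one fresh central vertex, since any forest is a vertex-minor of a tree by deleting such a vertex), these propositions yield a rank-expansion $H$ of $G$ of tree-width at most $2$ that contains $G$ as a pivot-minor, hence as a vertex-minor. When $k=1$, each $U_e$ is a singleton, so for every inner vertex $v$ of the rank-decomposition tree $T$ the bag $S_v$ has at most three vertices and $H[S_v]$ is either a forest or a triangle, while the type-$1$ edges link adjacent bags along the tree $T[V_I(T)]$. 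Hence the only obstruction to $H$ itself being a tree is the presence of triangles in some bags $S_v$. I would remove each such triangle by a single local complementation at a carefully chosen vertex of the bag; the cross-bag edges that this operation may introduce along the type-$1$ link (at most two per operation, thanks to $\abs{S_v}\leq 3$) can be cancelled by one additional local complementation in the adjacent bag. Performing this cleanup in a suitable order along $T$ yields a tree $T^{\ast}$ that still contains $G$ as a vertex-minor.

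The hard part will be verifying that this cleanup cascade really does terminate in a tree and preserves $G$ as a vertex-minor. The restriction $\abs{S_v}\leq 3$ forced by $k=1$ is essential, since it bounds both the number of triangles per bag and the number of cross-bag edges a single local complementation can introduce, and thus keeps the local coordination of complementations manageable.
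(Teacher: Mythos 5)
Your treatment of $(1)\Leftrightarrow(2)$, $(1)\Leftrightarrow(3)$, $(4)\Rightarrow(1)$, and the reduction of $(1)\Rightarrow(4)$ to the connected case all match the paper and are fine (Proposition~\ref{prop:treewidth} is not actually needed here; only Proposition~\ref{prop:pivotminor} is used). The gap is in your plan for eliminating the triangles of the rank-expansion $H$. You propose to local complement \emph{inside} $H$ to destroy each triangle and then ``cancel'' the side effects in adjacent bags. But consider a triangle $abc$ in a bag $S_v$ in which all three vertices have neighbours outside the triangle --- this happens whenever the incoming edge and both outgoing edges at $v$ are inner edges of $T$, so that $a$, $b$, $c$ each have a partner across $\overline{e}$, $\overline{f_1}$, $\overline{f_2}$. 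Local complementation at $a$, whose neighbours are $b$, $c$ and its partner $a'$, deletes $bc$ but creates $a'b$ and $a'c$, turning one triangle into the two triangles $aa'b$ and $aa'c$; and the ``one additional local complementation'' at $a'$ that you propose would toggle $bc$ back on (recreating the original triangle) while simultaneously disturbing the edges inside $a'$'s own bag. Nothing in your sketch shows this cascade terminates, and the local analysis above suggests it does not, at least not in the form described. You flag this as ``the hard part,'' but it is precisely the content of the implication, so leaving it unverified is a genuine gap.

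The paper avoids the cascade entirely by running the construction in the opposite direction: it first observes that $H$ is connected with no cycle of length at least $4$ (each $H[S_v]$ is a path of length $2$ or a triangle, each $H[\overline{e}]$ a single edge), and then builds a \emph{larger} tree $Q$ from $H$ by replacing every triangle $abc$ with a claw $K_{1,3}$ centred at a fresh vertex $d$ adjacent to $a,b,c$. Since $d$ has no neighbours other than $a,b,c$, applying local complementation at $d$ and deleting $d$ restores the triangle with no side effects whatsoever, so $H$ --- and hence $G$ --- is a vertex-minor of the tree $Q$. If you want to salvage your direction, you would need a separate argument (in the spirit of Bouchet's result on transforming trees by local complementations) that a connected graph all of whose blocks are edges or triangles is locally equivalent to a tree; as written, the proposal does not establish this.
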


\begin{proof}
$((1)\Leftrightarrow (2))$ is proved by Oum~\cite{Oum05}, and $((2)\Leftrightarrow (3))$ follows from the Bouchet's theorem~\cite{Bouchet87,Bouchet88}. Since every tree has rank-width at most $1$, $((4)\Rightarrow (1))$ is trivial. We want to prove that $(1)$ implies $(4)$.

Let $G$ be a graph of rank-width at most $1$. We may assume that $G$ is connected. If $\abs{V(G)}\leq 2$, then $G$ itself is a tree. So we may assume that $\abs{V(G)}\geq 3$. Let $(T,L)$ be a rank-decomposition of $G$ of width $1$. From Proposition~\ref{prop:pivotminor}, a rank-expansion $H$ with the rank-decomposition $(T,L)$ has $G$ as a pivot-minor.

The width of each edge in $T$ is $1$. Thus for $v\in V_I(T)$, the subgraph $H[S_v]$ is a path of length $2$ or a triangle because $G$ is connected. Also for $e\in E_I(T)$, $H[\overline{e}]$ consists of an edge. Therefore $H$ is connected and does not have cycles of length at least $4$.

Let $Q$ be a tree obtained from $H$ by replacing each triangle $abc$ with $K_{1,3}$ by adding a new vertex $d$, making $d$ adjacent to $a$, $b$, $c$ and deleting $ab$, $bc$, $ca$. Clearly $H$ is a vertex-minor of the tree $Q$ because we can obtain the graph $H$ from $Q$ by applying local complementation on those new vertices and deleting them. Therefore $G$ is a vertex-minor of a tree, as required.
\end{proof}

\begin{figure}[t]
\setlength{\unitlength}{.020in}
\begin{picture}(55,55)

\put(10,30){\circle*{3}}
\put(20,10){\circle*{3}}
\put(30,50){\circle*{3}}
\put(40,10){\circle*{3}}
\put(50,30){\circle*{3}}

\put(10,30){\line(1,1){20}}
\put(10,30){\line(1,-2){10}}
\put(20,10){\line(1,0){20}}
\put(40,10){\line(1,2){10}}
\put(50,30){\line(-1,1){20}}

\put(30,2){\makebox(0,0){$C_5$}}
\end{picture}
\begin{picture}(60,60)

\put(12,12){\circle*{3}}
\put(48,12){\circle*{3}}
\put(30,50){\circle*{3}}

\put(24,24){\circle*{3}}
\put(30,36){\circle*{3}}
\put(36,24){\circle*{3}}

\put(12,12){\line(1,1){12}}
\put(48,12){\line(-1,1){12}}
\put(30,50){\line(0,-1){14}}
\put(24,24){\line(1,2){6}}
\put(30,36){\line(1,-2){6}}
\put(36,24){\line(-1,0){12}}

\put(30,2){\makebox(0,0){$N$}}
\end{picture}
\begin{picture}(60,60)

\put(10,30){\circle*{3}}
\put(22,30){\circle*{3}}
\put(38,30){\circle*{3}}
\put(30,38){\circle*{3}}
\put(30,22){\circle*{3}}
\put(50,30){\circle*{3}}

\put(10,30){\line(1,0){12}}
\put(22,30){\line(1,1){8}}
\put(30,38){\line(1,-1){8}}
\put(38,30){\line(-1,-1){8}}
\put(30,22){\line(-1,1){8}}
\put(38,30){\line(1,0){12}}

\put(30,2){\makebox(0,0){$Q$}}

\end{picture}
\caption{The graphs $C_5$, $N$ and $Q$.}\label{fig:lrwd}
\end{figure}
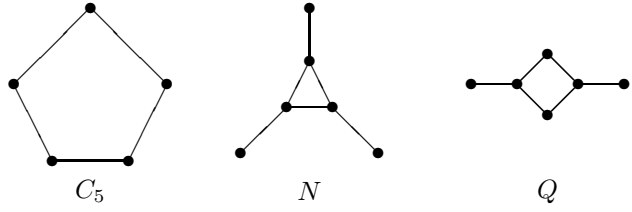

We also obtain a characterization of graphs with linear rank-width at most $1$.  Obstructions sets for graphs of linear rank-width $1$ are $C_5$, $N$ and $Q$~\cite{Adler11}, depicted in Figure~\ref{fig:lrwd}.

\begin{LEM}\label{lem:caterpillar}
Every subcubic caterpillar is a pivot-minor of a path.
\end{LEM}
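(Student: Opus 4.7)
The plan is by explicit construction, split into three cases by the length $n$ of the spine of a subcubic caterpillar $C$. Since every inner vertex of $C$ has degree exactly $3$, the possibilities are: $C\cong K_2$ when $n=0$; $C\cong K_{1,3}$ when $n=1$; or $C$ has a spine $v_1v_2\cdots v_n$ with $n\ge 2$ in which each of $v_1,v_n$ carries two pendant leaves and each interior $v_i$ carries one pendant leaf. The case $n=0$ is trivial as $K_2$ is already a path. For $n=1$, a direct calculation handles it: in the $5$-vertex path $0{-}1{-}2{-}3{-}4$, pivoting the edge $\{1,2\}$ toggles in the new edge $\{0,3\}$ and swaps the labels $1$ and $2$, after which deleting the vertex $2$ leaves the edges $\{0,3\}, \{1,3\}, \{3,4\}$, i.e.\ $K_{1,3}$ centered at $3$.

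For $n\ge 2$, I would take $P$ to be the path on $\{0,1,\dots,3n+1\}$ and, for each $i=1,\dots,n$, pivot the edge $\{3i-2,3i-1\}$ and then delete the vertex $3i-1$. The pivoted edges are pairwise vertex-disjoint, so each $2\times 2$ principal block of $A(P)$ corresponding to a pivoted edge is nonsingular, and by Theorem~\ref{thm:22} the pivots may be carried out in any order, or equivalently bundled into a single matrix pivot on $X=\bigcup_{i=1}^n \{3i-2,3i-1\}$. Each individual pivot has its effect confined to the $4$-vertex window $\{3i-3,3i-2,3i-1,3i\}$: it toggles the new edge $\{3i-3,3i\}$ and swaps the roles of $3i-2$ and $3i-1$, and hence after deleting $3i-1$ the vertex $3i-2$ becomes a pendant leaf of $3i$ and the old path-edge $\{3i-3,3i-2\}$ is replaced by the spine-edge $\{3i-3,3i\}$.

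Identifying $v_i$ with $3i$ for $i=1,\dots,n$, the plan is then to read off that the surviving graph has spine $3,6,\dots,3n$, pendants $0$ and $1$ at $3$ (from pivot~$1$), a pendant $3i-2$ at each interior spine vertex $3i$ (from pivot~$i$), and pendants $3n-2$ and $3n+1$ at $3n$ (the former from pivot~$n$, the latter surviving as the untouched path-edge $\{3n,3n+1\}$ outside the last window). This is precisely the required $C$. The only point needing real care is the bookkeeping at the shared endpoint $3i$ of two consecutive windows: the image of the path-edge $\{3i,3i+1\}$ under the pivot in window $i+1$ must be exactly cancelled by the deletion of $3i+2$ in interior positions, while the analogous edge $\{3n,3n+1\}$ at the right end must remain to give the second pendant of $3n$. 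This is a short explicit verification over $\mathbb{F}_2$, made straightforward by the locality of each pivot and the fact that no crosstalk occurs between windows beyond their shared vertex.
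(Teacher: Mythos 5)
Your construction is correct and is essentially the paper's own proof: both insert a pair of adjacent new vertices between consecutive spine vertices, pivot each such pair (pairwise disjoint edges, so the pivots combine into a single matrix pivot) to create the spine edge, and delete one vertex of each pair while keeping the other as the required pendant leaf. The paper phrases this as doubly subdividing a longest path of the caterpillar and pivoting the middle edges, but the mechanism and the local verification are the same as in your explicit indexed version.
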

\begin{proof}
Let $H$ be a subcubic caterpillar. By the definition of a caterpillar, there is a path $P$ in $H$ such that every vertex in $V(H)\setminus V(P)$ is a leaf. We choose such path $P=p_1p_2 \ldots p_m$ in $H$ with maximum length. We construct a path $Q$ from $P$ by replacing each edge $p_i p_{i+1}$ with a path $p_ia_ib_ip_{i+1}$. We can obtain a pivot-minor of $Q$ isomorphic to $P$ by pivoting each edge $a_ib_i$ and deleting all $a_i$ and deleting $b_i$ if $p_i$ is not adjacent to a leaf in $H$.
\end{proof}

\begin{THM}\label{thm:lrwd1}
Let $G$ be a graph. The following are equivalent:
\begin{enumerate}
\item $G$ has linear rank-width at most $1$.
\item $G$ has no vertex-minor isomorphic to $C_5$, $N$ or $Q$.
\item $G$ is a vertex-minor of a path.
\end{enumerate}
\end{THM}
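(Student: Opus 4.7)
The equivalence $(1)\Leftrightarrow(2)$ is due to \cite{Adler11}, and $(3)\Rightarrow(1)$ is immediate because every path has linear rank-width at most~$1$ and linear rank-width does not grow under vertex-minors. So the only substantive direction is $(1)\Rightarrow(3)$, and my plan is to parallel the proof of Theorem~\ref{thm:rwd1} while exploiting the caterpillar structure of a linear rank-decomposition. A standard reduction handles small and disconnected cases: small connected graphs are already paths, and two paths can be spliced into one by inserting a bridge vertex that is subsequently deleted. Thus it suffices to treat a connected $G$ with $\abs{V(G)}\ge 3$.

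Under this assumption I would choose a linear rank-decomposition $(T,L)$ of $G$ of width~$1$, so that $T$ is a subcubic caterpillar and $T[V_I(T)]$ is a path $v_1v_2\ldots v_n$. Form the rank-expansion $H$ of $G$ with respect to $(T,L)$; by Proposition~\ref{prop:pivotminor}, $G$ is a pivot-minor, hence a vertex-minor, of $H$. Exactly as in the proof of Theorem~\ref{thm:rwd1}, the width-$1$ hypothesis forces each $H[\overline{e}]$ with $e\in E_I(T)$ to consist of a single edge and each $H[S_v]$ with $v\in V_I(T)$ to be either a triangle or a path on three vertices. I would then define $Q'$ from $H$ by replacing each triangle $H[S_v]$ with a $K_{1,3}$: introduce a new center $d_v$ adjacent to the three vertices of the triangle, and delete the triangle edges. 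Applying local complementation at each $d_v$ and then deleting $d_v$ recovers $H$, so $H$ is a vertex-minor of $Q'$.

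The crux is to check that $Q'$ is a subcubic caterpillar whose spine traces $v_1v_2\ldots v_n$, with the spine edge between the portions at $v_i$ and $v_{i+1}$ provided by the unique edge of $\overline{v_iv_{i+1}}$. The portion of $Q'$ at each $v_i$ has at most four vertices, and the edges leaving it to neighbouring portions are incident with exactly the one or two vertices corresponding to inner edges of $T$ at $v_i$; a direct case check according to the shape (path on three vertices, or $K_{1,3}$) and the positions of these external vertices shows that the spine can be routed through the portion between them, leaving the remaining vertices as pendants, with every vertex reaching degree at most~$3$. Once $Q'$ is known to be a subcubic caterpillar, Lemma~\ref{lem:caterpillar} produces a path $P$ of which $Q'$ is a pivot-minor, and chaining the vertex-minor relations from $G$ to $H$, from $H$ to $Q'$, and from $Q'$ to $P$ completes the proof.

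The main obstacle I anticipate is precisely this caterpillar verification: one must route the spine consistently through every portion, including the two endpoint portions at $v_1$ and $v_n$ that carry only one external edge, while simultaneously respecting the degree bound of~$3$. However, since each portion has at most four vertices and the external-edge pattern is determined in a transparent way by the inner-edge/leaf-edge distinction in~$T$, the required case analysis is finite and routine, so I expect no real difficulty.
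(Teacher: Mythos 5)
Your proposal is correct and follows essentially the same route as the paper: reduce to a connected $G$ with $\abs{V(G)}\ge 3$, form the rank-expansion $H$ from a width-$1$ linear rank-decomposition, eliminate the triangles $H[S_v]$ so as to obtain a subcubic caterpillar of which $H$ is a vertex-minor, and finish with Lemma~\ref{lem:caterpillar}. The only (harmless) deviation is in the triangle step: you substitute a $K_{1,3}$ with a new center as in Theorem~\ref{thm:rwd1}, whereas the paper simply deletes the edge of each triangle joining its two non-leaf vertices, turning the triangle into a path through its degree-$2$ vertex and recovering $H$ by local complementation there; both yield a subcubic caterpillar and your degree/spine case check goes through.
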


\begin{proof}
$((1)\Leftrightarrow (2))$ is proved by Adler, Farley and Proskurowski~\cite{Adler11}. Since every path has linear rank-width at most $1$, $((3)\Rightarrow (1))$ is trivial. Let us prove that $(1)$ implies~$(3)$.

Let $G$ be a graph of linear rank-width at most 1. We may assume that $G$ is connected and $\abs{V(G)}\geq 3$. Let $H$ be a rank-expansion of $G$ with a linear rank-decompostion $(T,L)$ of width $1$. Note that $T$ is a caterpillar.

Since $(T,L)$ is a linear rank-decomposition of width $1$, for each triangle in $H$, one of those vertices is of degree $2$ in $H$. Let $P$ be a subcubic caterpillar obtained from $H$ by replacing each triangle with a path of length $2$ whose internal vertex has degree $2$ in $H$. We can obtain $H$ from $P$ by applying local complementation on the inner vertex of those paths of length $2$, $H$ is a vertex-minor of $P$. And by Lemma~\ref{lem:caterpillar}, $P$ is a pivot-minor of a path. Therefore $G$ is a vertex-minor of a path.
\end{proof}

\begin{figure}[t]
\setlength{\unitlength}{.032in}

\begin{picture}(78,35)

\multiput(10,17)(-2,0){3}{\circle*{1}}
\multiput(70,17)(2,0){3}{\circle*{1}}

\put(15,10){\circle*{2}}
\put(35,10){\circle*{2}}
\put(45,10){\circle*{2}}
\put(65,10){\circle*{2}}

\put(25,25){\circle*{2}}
\put(55,25){\circle*{2}}

\put(40,0){\makebox(0,0){$H$}}
\put(15,5){\makebox(0,0){$a$}}
\put(25,30){\makebox(0,0){$u$}}

\put(35,5){\makebox(0,0){$b$}}
\put(45,5){\makebox(0,0){$c$}}
\put(55,30){\makebox(0,0){$v$}}

\put(65,5){\makebox(0,0){$d$}}

\put(5,10){\line(1,0){10}}
\put(15,10){\line(1,0){20}}
\put(35,10){\line(1,0){10}}
\put(45,10){\line(1,0){20}}
\put(65,10){\line(1,0){10}}
\put(45,10){\line(2,3){10}}
\put(15,10){\line(2,3){10}}
\put(25,25){\line(2,-3){10}}

\end{picture}
\begin{picture}(70,35)

\put(40,0){\makebox(0,0){$P$}}
\multiput(10,17)(-2,0){3}{\circle*{1}}
\multiput(70,17)(2,0){3}{\circle*{1}}
\put(15,10){\circle*{2}}
\put(35,10){\circle*{2}}
\put(45,10){\circle*{2}}
\put(65,10){\circle*{2}}

\put(25,25){\circle*{2}}

\put(51,10){\circle*{1.5}}
\put(58,10){\circle*{1.5}}

\put(51,15){\makebox(0,0){$x$}}
\put(58,15){\makebox(0,0){$v$}}

\put(15,5){\makebox(0,0){$a$}}
\put(25,30){\makebox(0,0){$u$}}
\put(35,5){\makebox(0,0){$b$}}
\put(45,5){\makebox(0,0){$c$}}
\put(65,5){\makebox(0,0){$d$}}

\put(5,10){\line(1,0){10}}
\put(35,10){\line(1,0){10}}
\put(45,10){\line(1,0){20}}
\put(65,10){\line(1,0){10}}
\put(15,10){\line(2,3){10}}
\put(25,25){\line(2,-3){10}}
\end{picture}

\caption{A rank-expansion $H$ of a graph with linear rank-width $1$. The graph $H$ can be obtained from a path $P$ by applying local complementation on $u$ and pivoting $xv$ and deleting $x$.}
\label{fig:cat}
\end{figure}
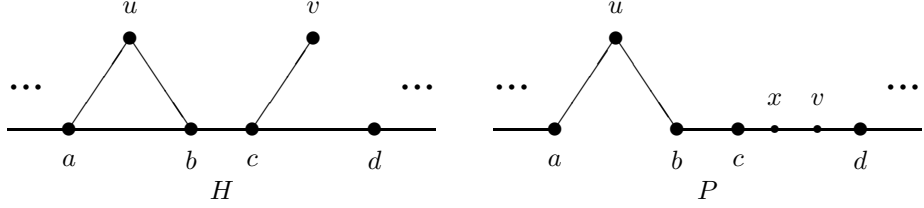

In Theorems \ref{thm:rwd1} and \ref{lem:caterpillar}, if a given graph is bipartite, we do not need to apply local complementation at some vertices. To prove it, we need the following lemma.

\begin{LEM}\label{lem:bipartite}
Let $G$ be a connected bipartite graph with rank-width $1$ and $\abs{V(G)}\geq 3$. Let $(T,L)$ be a rank-decomposition of width $1$. Then a rank-expansion of $G$ with respect to $(T,L)$ is a tree.
\end{LEM}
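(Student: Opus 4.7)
The strategy is to reuse the structural observations established in the proof of Theorem~\ref{thm:rwd1}: for a connected graph $G$ with $\rw(G)\le 1$ and $\abs{V(G)}\ge 3$, the rank-expansion $H$ is connected, every $H[\overline{e}]$ with $e\in E_I(T)$ is a single edge, every $H[S_v]$ with $v\in V_I(T)$ is a path of length $2$ or a triangle, and $H$ has no cycle of length at least $4$. My task is thus to rule out triangles in $H$ under the additional hypothesis that $G$ is bipartite.

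My first step is to show that every triangle of $H$ is contained in a single $H[S_v]$. Projecting a cycle of $H$ to $T$ via $(u,e,v)\mapsto v$ yields a closed walk in the tree $T$: edges inside some $S_v$ are idle, while an edge inside some $\overline{e}$ with $e=vw$ crosses the $T$-edge $vw$. A closed walk in a tree uses each edge an even number of times, so a $3$-cycle of $H$ cannot use any edge of any $\overline{e}$ (each contains only one edge since $\abs{U_e}=1$); all three vertices therefore lie in one $S_v$.

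Next I would analyze when $H[S_v]$ is itself a triangle. Let $e$ be the edge incoming to $v$ and $f_1,f_2$ the two outgoing edges, and write $U_e=\{u\}$, $U_{f_1}=\{a\}$, $U_{f_2}=\{b\}$. Since $A_e=A_{f_1}\sqcup A_{f_2}$, after swapping $f_1$ and $f_2$ if necessary we may take $u\in A_{f_1}$, and then the basis-inclusion rule $(U_e\cap A_{f_1})\subseteq U_{f_1}$ forces $u=a$. Under this normalization the three candidate edges of $H[S_v]$ are: $\{(a,e,v),(a,f_1,v)\}$, always present because $(C_{f_1})_{a,a}=1$; $\{(a,e,v),(b,f_2,v)\}$, present iff $(C_{f_2})_{b,a}=1$, that is, iff the row of $b$ in $A(G)[A_e,B_e]$ equals the nonzero row of $a$; and $\{(a,f_1,v),(b,f_2,v)\}$, present iff $ab\in E(G)$.

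The only substantive use of bipartiteness is the final step of pulling a would-be triangle back to $G$: if all three edges above are present, then $N(a)\cap B_e=N(b)\cap B_e\ne\emptyset$ (the two rows agree and are nonzero), so $a$ and $b$ share a common neighbor $c\in B_e$, and together with $ab\in E(G)$ the set $\{a,b,c\}$ induces a triangle in $G$, contradicting bipartiteness. Hence no $H[S_v]$ is a triangle, so $H$ is triangle-free; combined with the absence of cycles of length at least $4$, $H$ is acyclic, and being connected, $H$ is a tree. The boundary vertex $y$ adjacent to the root leaf $x$ is handled identically, since the edge $xy$ is still oriented into $y$ and the same basis-inclusion rule applies at $y$.
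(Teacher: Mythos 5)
Your proof is correct and takes essentially the same route as the paper: locate any triangle of $H$ inside a single $H[S_v]$, observe that the three adjacencies force the rows of the basis representatives in $A(G)[A_e,B_e]$ to coincide and be nonzero, and pull back to a triangle in $G$ via a common neighbor in $B_e$, contradicting bipartiteness. Your normalization $u=a$ via the basis-inclusion rule and the explicit closed-walk argument localizing triangles are just more detailed renderings of steps the paper treats as immediate.
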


\begin{proof}
Let $x\in V(T)$ be a leaf and $H$ be a rank-expansion $\boldsymbol{R}(G,T,L,x,\{U_f\}_{f\in E(T)})$ of $G$.

Suppose that $H$ has a triangle. Then there exists a vertex $v\in V_I(T)$ such that $H[S_v]$ is the triangle. Let  $e_1$, $e_2$ and $e_3$ be edges incident with $v$ and assume that $e_1$ is the incoming edge. Let $U_{e_1}=\{a\}$, $U_{e_2}=\{b\}$ and $U_{e_3}=\{c\}$. By the construction of a rank-expansion, $bc\in E(G)$ and $R^{e_1}_a=R^{e_1}_b=R^{e_1}_c$. Since $R^{e_1}_a$ is a non-zero vector, there is a vertex $x\in V(G)$ such that $x$ is adjacent to all of $a$, 
$b$ and $c$. Therefore $xbc$ is a triangle in $G$, contradiction.
\end{proof}

\begin{THM}
Let $G$ be a graph. Then $G$ is bipartite and has rank-width at most $1$ if and only if $G$ is a pivot-minor of a tree.
\end{THM}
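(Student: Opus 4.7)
The plan is to handle the two directions separately. The backward direction is immediate from facts already in the paper: every tree is bipartite and has rank-width at most $1$, and both properties are preserved under taking pivot-minors (bipartiteness is remarked explicitly just after the definition of pivoting, and the inequality $\rw(H)\le \rw(G)$ for pivot-minors is stated in the rank-width preliminaries). So if $G$ is a pivot-minor of a tree, then $G$ is bipartite and $\rw(G)\le 1$.

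For the forward direction, suppose $G$ is bipartite with $\rw(G)\le 1$. I will first dispose of the easy cases. If $\abs{V(G)}\le 2$, then $G$ itself is either a tree (a single vertex or $K_2$) or the edgeless graph on two vertices (which is a pivot-minor of the tree $P_3$ after deleting its middle vertex), and we are done. So assume some component of $G$ has at least three vertices. The main case is when $G$ is connected with $\abs{V(G)}\ge 3$: fix a rank-decomposition $(T,L)$ of $G$ of width $1$ and build a rank-expansion $H$ of $G$. By Proposition~\ref{prop:pivotminor}, $G$ is a pivot-minor of $H$, and by Lemma~\ref{lem:bipartite} the rank-expansion $H$ is itself a tree. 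This settles the connected case.

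The main obstacle is the disconnected case, since Lemma~\ref{lem:bipartite} and Proposition~\ref{prop:pivotminor} are stated only for connected graphs. Let $G_1,\ldots,G_c$ be the components of $G$. By the connected case, for each $i$ there is a tree $T_i$ with $G_i$ as a pivot-minor. The plan is to glue the $T_i$'s into a single tree $T$ by introducing one new vertex $r$ adjacent to an arbitrary vertex of each $T_i$; the result is a tree. To recover $G$ from $T$ as a pivot-minor, first delete $r$, which yields the disjoint union $T_1\sqcup\cdots\sqcup T_c$, and then apply within each $T_i$ the sequence of pivots and deletions witnessing $G_i$ as a pivot-minor of $T_i$. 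These local operations commute across disjoint components and all required principal submatrices remain nonsingular, so the final graph is $G_1\sqcup\cdots\sqcup G_c=G$. Therefore $G$ is a pivot-minor of the tree $T$, completing the proof.
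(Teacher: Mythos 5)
Your proof is correct and follows essentially the same route as the paper: the backward direction from preservation of bipartiteness and rank-width under pivot-minors, and the forward direction by taking a rank-expansion $H$ of a width-$1$ rank-decomposition and invoking Lemma~\ref{lem:bipartite} together with Proposition~\ref{prop:pivotminor}. The only difference is that the paper simply asserts ``we may assume $G$ is connected,'' whereas you spell out the reduction (gluing the trees $T_i$ at a new vertex $r$ and deleting $r$ first); that argument is sound, modulo the trivial observation that a component with at most two vertices is itself a tree.
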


\begin{proof}
We may assume that $G$ is connected. Since every tree has rank-width at most $1$, backward direction is trivial. If $G$ is bipartite and has rank-width at most $1$, then by Lemma~\ref{lem:bipartite}, we have a rank-expansion of $G$ which is a tree. Hence, $G$ is a pivot-minor of a tree. 
\end{proof}

\begin{THM}
Let $G$ be a graph. Then $G$ is bipartite and has linear rank-width $1$ if and only if $G$ is a pivot-minor of a path.
\end{THM}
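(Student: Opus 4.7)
The plan is to combine the arguments from Theorem~\ref{thm:lrwd1} with the previous theorem characterizing bipartite graphs of rank-width at most~$1$. The backward direction is immediate: a path is bipartite with linear rank-width at most $1$, and both properties are preserved by taking pivot-minors.

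For the forward direction, I would proceed by induction on $\abs{V(G)}$, handling the base cases with $\abs{V(G)}\leq 2$ directly (each such graph is either a path or is obtained from a short path by deleting one vertex). For the connected case with $\abs{V(G)}\geq 3$, take a linear rank-decomposition $(T,L)$ of width~$1$---so $T$ is a caterpillar---fix a leaf $x\in V(T)$, and form the rank-expansion $H=\boldsymbol{R}(G,T,L,x,\{U_f\}_{f\in E(T)})$. By Proposition~\ref{prop:pivotminor}, $G$ is a pivot-minor of $H$. Since each $\abs{U_e}\leq 1$, every blob $H[S_v]$ has at most three vertices, so $H$ is subcubic; and Lemma~\ref{lem:bipartite} tells us $H$ has no triangle and is in fact a tree. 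The key additional step is to verify that $H$ is actually a caterpillar: because $T[V_I(T)]$ is a path, the blobs are arranged in a linear order, joined to consecutive blobs through the single vertex pair inside $H[\overline{e}]$ for each inner edge $e$; a short case analysis---each $H[S_v]$ is either a point, an edge, or a $P_3$ (the triangle case being excluded by bipartiteness)---confirms that removing the leaves of $H$ leaves a path running through the ``middles'' of consecutive blobs. With $H$ identified as a subcubic caterpillar, Lemma~\ref{lem:caterpillar} yields $H$ as a pivot-minor of a path, and transitivity of the pivot-minor relation then gives $G$ as a pivot-minor of a path.

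For disconnected $G$ with components $C_1,\ldots,C_r$, each component is bipartite with linear rank-width at most~$1$ and has fewer vertices, so by induction each $C_i$ is a pivot-minor of some path $Q_i$. I would then form a path $Q$ by concatenating $Q_1,\ldots,Q_r$ with a single new intermediate vertex between each consecutive pair; deleting those intermediate vertices yields $Q_1\sqcup\cdots\sqcup Q_r$ as a pivot-minor of $Q$, and the pivot-minor operations witnessing each $C_i$ as a pivot-minor of $Q_i$ can then be executed independently, since operations within one component do not affect the others. The main obstacle is the caterpillar verification for $H$ in the connected case; all other ingredients assemble directly from Proposition~\ref{prop:pivotminor}, Lemma~\ref{lem:bipartite}, and Lemma~\ref{lem:caterpillar}.
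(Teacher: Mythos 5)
Your proposal is correct and follows essentially the same route as the paper: form the rank-expansion $H$ from a width-$1$ linear rank-decomposition, use Lemma~\ref{lem:bipartite} to see that $H$ is a tree, observe that $H$ is a subcubic caterpillar because $T$ is, and finish with Lemma~\ref{lem:caterpillar}. The extra details you supply (the explicit caterpillar verification and the reduction to the connected case) are exactly the steps the paper leaves implicit.
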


\begin{proof}
We may assume that $G$ is connected. Similarly, backward direction is trivial. Suppose $G$ is bipartite and has linear rank-width $1$. Let $H$ be a rank-expansion of $G$ with a linear rank-decomposition $(T,L)$ of width $1$. By Lemma~\ref{lem:bipartite}, the graph $H$ is a tree, and since $T$ is a subcubic caterpillar, $H$ is also a subcubic caterpillar. By Lemma~\ref{lem:caterpillar}, $H$ is a pivot-minor of a path, and so is $G$. 
\end{proof}


\end{document}